\DeclarePairedDelimiter\ceil{\lceil}{\rceil}
\newcommand{\cmark}{\textcolor{green!80!black!70!}{\text{\ding{51}}}}
\newcommand{\xmark}{\textcolor{red!70!black!80!}{\text{\ding{55}}}}
\newtheorem{theorem}{Theorem}
\newtheorem{lemma}[theorem]{Lemma}
\newtheorem{proposition}[theorem]{Proposition}
\theoremstyle{remark}
\newtheorem{remark}[theorem]{Remark}
\newcommand{\thmref}[1]{Theorem~\ref{#1}}
\newcommand{\propref}[1]{Proposition~\ref{#1}}
\newcommand{\lemref}[1]{Lemma~\ref{#1}}
\newcommand{\secref}[1]{\S~\ref{#1}}
\newcommand{\figref}[1]{Figure~\ref{#1}}
\newcommand{\appref}[1]{Appendix~\ref{#1}}
\newcommand{\ie}{{i.e.}}
\newcommand{\eg}{{e.g.}}
\newcommand*{\rom}[1]{\expandafter\@slowromancap\romannumeral #1@}
\newcommand\myeq[1]{\mathrel{\stackrel{\makebox[0pt]{\mbox{\normalfont\tiny #1}}}{=}}}
\newcommand\myle[1]{\mathrel{\stackrel{\makebox[0pt]{\mbox{\normalfont\tiny #1}}}{\le}}}
\newcommand{\Natural}{\mathbb{N}}
\newcommand{\Real}{\mathbb{R}}
\newcommand{\ee}{\mathbb{E}}
\def\bigl{\mathopen\big}
\def\bigr{\mathclose\big}
\newcommand{\unit}{\mathbb{I}}
\newcommand{\id}{\mathcal{I}}
\newcommand{\hbt}{\mathfrak{H}}
\newcommand{\dt}{\Delta t}
\newcommand{\normbig}[1]{\big\lVert\,#1\,\big\rVert}
\newcommand{\normBig}[1]{\Big\lVert\,#1\,\Big\rVert}
\newcommand{\ttnorm}[1]{\norm{#1}_1}
\newcommand{\ttnormbig}[1]{\normbig{#1}_1}
\newcommand{\ttnormBig}[1]{\normBig{#1}_1}
\newcommand{\KRopbig}[1]{\mathcal{K}\bigl[#1\bigr]}
\newcommand{\KRopBig}[1]{\mathcal{K}\Big[#1\Big]}
\newcommand{\lbop}{\mathcal{L}}
\newcommand{\opA}{\mathcal{A}}
\newcommand{\dimn}{\mathsf{d}}
\newcommand{\effHami}{H_{\text{eff}}}
\newcommand{\alg}{\opA}
\newcommand{\NA}{\opA_{\dt}}
\newcommand{\UA}{\opA^{(\text{un})}_{\dt}}
\newcommand{\NAARG}[1]{\opA^{(#1)}_{\dt}}
\newcommand{\UAARG}[1]{\opA^{(\text{un},#1)}_{\dt}}
\newcommand{\intterm}{\mathcal{F}}
\newcommand{\krausJ}[1]{\mathcal{J}_{#1}}
\newcommand{\unitdim}[1]{\unit_{#1}}
\newcommand{\bigO}{\mathcal{O}}
\newcommand{\cn}{{Crank–Nicolson}}
\newcommand{\vneq}{{von Neumann equation}}
\newcommand{\pade}{Pad{\'e} approximation}
\newcommand{\padeN}{\mathscr{N}}
\newcommand{\padeR}{\mathscr{R}}
\newcommand{\cube}{\mathfrak{C}}
\newcommand{\vect}[1]{\boldsymbol{#1}}
\newcommand{\s}{s}
\newcommand{\revise}[1]{{#1}}
\newcommand{\kw}{Lindblad equation, structure-preserving scheme, error analysis, absolute stability}
\newcommand{\printabstract}{We study a family of structure-preserving deterministic numerical schemes for Lindblad equations. This family of schemes has a simple form and can systemically achieve arbitrary high-order accuracy in theory. Moreover, these schemes can also overcome the non-physical issues that arise from many traditional numerical schemes. Due to their preservation of physical nature, these schemes can be straightforwardly used as backbones for further developing randomized and quantum algorithms in simulating Lindblad equations. In this work, we systematically study \revise{this family of structure-preserving deterministic schemes} and perform a detailed error analysis, which is validated through numerical examples.}
\begin{document}

\title{Structure-preserving Numerical Schemes for Lindblad Equations}

\author{Yu Cao}
\email{yucao@sjtu.edu.cn}
\address{Institute of Natural Sciences and School of Mathematical Sciences, Shanghai Jiao Tong University, Shanghai 200240, China}
\author{Jianfeng Lu}
\email{jianfeng@math.duke.edu}
\address{Department of Mathematics, Department of Physics, and Department of Chemistry, Duke University, Box 90320, Durham, NC, 27708, USA}

\maketitle
\date{}

\begin{abstract}

\printabstract{}

\medskip

\noindent\textbf{Keywords.} {\kw{}}

\end{abstract}

\section{Introduction}

The study of open quantum systems \cite{breuer_theory_2007}, which describe and characterize the evolution of a quantum system interacting with its environment, is an essential topic in quantum physics, physical chemistry, and information theory. Under the assumption of weak system-environment coupling, the Lindblad equation is a widely used quantum master equation that approximates the dynamical evolution of the system \cite{breuer_theory_2007,davies_markovian_1974}. It is also well-known that the generator of a completely positive (CP) trace-preserving semigroup dynamics must have the Lindbladian form \cite{lindblad_generators_1976,gorini_completely_1976}. 
Lindblad equations have been widely studied in many scientific fields, including but not limited to,  quantum optics \cite{briegel_quantum_1993,carmichael_open_1993}, quantum computation \cite{verstraete_quantum_2009,chi_quantum_2023}, entropy production and thermodynamics \cite{spohn_entropy_1978,alicki_introduction_2018}, superconductivity \cite{kosov_lindblad_2011}.
A recent introductory paper on Lindblad equation could be found in \cite{manzano_short_2020}.

The generic form of Lindblad equation is given as follows \cite{lindblad_generators_1976,gorini_completely_1976}:
\begin{subequations}
\label{eqn::lb}
\begin{align}
\dot{\rho}_t &= \lbop(\rho_t),\\
\lbop(\rho) &:= - i \comm{H}{\rho} + \sum_{k=1}^{\varkappa} \Big(L_k \rho L_k^{\dagger} -\frac{1}{2} \acomm\big{L_k^\dagger L_k}{\rho}\Big), \label{eqn::lbop}
\end{align}
\end{subequations}
where $H$ is Hermitian and is interpreted as the Hamiltonian of the system; $L_k$ are Lindblad operators modeling the interaction with the environment;  $\varkappa\in \Natural$.
When $L_k = 0$ for all $1\le k\le \varkappa$, the above equation reduces to the \vneq{}, which is the density matrix reformulation of Schrödinger equation. Physically, this situation means the system is totally isolated from the environment and hence closed.
Suppose the dimension of the system is $\dimn < \infty$, then the density matrix $\rho_t$ is a $\dimn$-by-$\dimn$ positive semidefinite matrix with unit trace.
The non-Markovian generalization of Lindblad equations has been studied in \eg{}, \cite{breuer_non-Markovian_2007}.

\subsection*{Problem and motivation}

For Lindblad equations, an important numerical problem is to design efficient and effective numerical schemes.
Both deterministic, randomized, and quantum methods have been widely studied for Lindblad equations; see \secref{sec::review} for a comparative review on related deterministic schemes and \secref{sec::discussion} for discussions on randomized and quantum algorithms.
For deterministic schemes, one understudied question is how to numerically simulate Lindblad equations while maintaining the physical structure of density matrices.
It has been recently shown and emphasized by Riesch and Jirauschek \cite{riesch_analyzing_2019}
that classical Runge-Kutta (RK) methods do not preserve the positivity of density matrices even for Hamiltonian evolution ($L_k = 0$ for all $k$ in \eqref{eqn::lb}).
Their work motivates and leads us to present and analyze a family of deterministic structure-preserving schemes for Lindblad equations (see \secref{sec::scheme} below),
based on an idea in the work by Steinbach, Garraway, and Knight \cite{steinbach_high-order_1995}.

\subsection*{Contributions}
Despite a significant amount of works on simulating Lindblad equations, there are very few systematic approaches and presentations to unify deterministic, randomized, and quantum algorithms for this problem. Due to the fundamental physical nature of Lindblad equation, it is not surprising that a natural idea is to use the Kraus operator to approximate the local Lindblad evolution.
In this work, we present a simple family of structure-preserving deterministic schemes that are built based on quadrature schemes for integrals based on \cite{steinbach_high-order_1995}: such a family of positivity-preserving schemes $\alg_{\dt}$ have the Kraus form such that 
 \begin{align*}
\ttnormbig{\rho_T - (\alg_{\dt})^{N}(\rho_0)} \le c\ T^{} \dt^M,
\end{align*}
where the time step $\dt=T/N$, $c$ is some constant, $M$ is the order of the scheme (see Theorem~\ref{thm::error_anal}).
In order to preserve the trace, one can easily normalize $\alg_{\dt}$ to obtain a scheme which both preserves the positivity and unit trace properties in Lindblad equations.
This family of methods are also very suitable and convenient to design other efficient algorithms:
\begin{itemize}[leftmargin=1cm]
\item These schemes can be used as backbones for randomized algorithms by directly randomizing the summation of Kraus operators via any sampling methods, {without the need} to employ numerical schemes for stochastic jump processes or stochastic differential equations, obtained by the unraveling methods \cite{percival_quantum_1998}. Moreover, our framework is more convenient for embracing advanced probabilistic methods like importance sampling \cite{liu_monte_2004} and random batch method \cite{jin_partially_2023}.

\item Any unitary dilation of the Kraus operators (e.g., the Stinespring dilation theorem \cite{stinespring_positive_1955} and its variants) can be applied to these schemes to provide a quantum algorithm for simulating Lindblad equations; see a brief discussion in \secref{sec::discussion}.
\end{itemize}

From the above discussion, it is clear that such a family of Kraus-operator based deterministic schemes are important and useful in developing randomized and quantum algorithms (which we shall elaborate in \secref{sec::discussion}), and these schemes are also efficient in simulating Lindblad equations on a classical computer. As far as we know, there is no previous literature that is fully denoted into presenting such schemes explicitly, analyzing their errors systematically, and discussing their connections with many other algorithms.

\subsection*{Organization of this work}

This paper is organized as follows. In \secref{sec::scheme}, we formulate the idea in \cite{steinbach_high-order_1995} and propose both normalized and unnormalized schemes that preserve the positivity of density matrices, up to any order.
Particularly in \secref{sec::error_anal}, we analyze and quantify the error for  structure-preserving schemes in \thmref{thm::error_anal}. The error analysis is validated by numerical examples in \secref{sec::numerics}. In \secref{sec::discussion}, \revise{we elaborate on how the family of structure-preserving schemes could connect to stochastic and quantum algorithms}. Finally, we provide an ending remark and conclusions in \secref{sec::conclusion}.

\subsection*{Notations}
Suppose $\hbt$ is the Hilbert space of the (open) quantum system, and $L(\hbt)$ is the space of linear operators on $\hbt$. The operators acting on $L(\hbt)$ will be called superoperators, as commonly used in the literature of open quantum systems.
The dimension of $\hbt$ is denoted as $\dimn <\infty$ throughout this paper.
The $\dimn\times \dimn$ identity matrix is denoted by $\unit_\dimn$, and the identity superoperator is denoted by $\id$.
For a two-level quantum system, $\sigma_{X}$, $\sigma_Y$, $\sigma_Z$ represent the Pauli $X$, $Y$, $Z$ matrices, and
$\sigma_{+} = \begin{bsmallmatrix} 0 & 1 \\ 0 & 0 \end{bsmallmatrix}$ and $\sigma_{-}= \begin{bsmallmatrix} 0 & 0 \\ 1 & 0 \end{bsmallmatrix}$.
For any operator $A\in L(\hbt)$, the norm $\norm{A}_{p} := \big(\tr\abs{A}^p\big)^{1/p}$  is the Schatten-$p$ norm.
For any superoperator $\opA: L(\hbt)\rightarrow L(\hbt)$, the norm $\ttnorm{\opA}$  is the induced Schatten-$1$ norm.
The superoperator $\KRopbig{A}$ generated by a Kraus operator $A$ is denoted by
\begin{align*}
\KRopbig{A}(\rho) &:= A \rho A^{\dagger},\qquad \forall \rho,
\end{align*}
which is well-known to be a completely positive superoperator; see e.g., \cite[{Chapter 4.4}]{wilde_quantum_2017}.

\section{On deterministic methods for Lindblad equations}
\label{sec::review}

The solution of Lindblad equation at a particular time $T$ is given by
$\rho_T = e^{\lbop T} (\rho_0)$.
Therefore, this is essentially a computational problem for matrix exponential \cite{moler_nineteen_2003}.
It is a common practice to apply the {scaling and squaring method}, \ie{}, \begin{align*}
e^{\lbop T}(\rho_0) = \big(e^{\lbop \dt}\big)^{T/\dt}(\rho_0).
\end{align*}
This method could reduce the computational problem $e^{\lbop T}(\rho_0)$ into efficiently estimating $e^{\lbop\dt}(\rho)$ for a small time step $\dt$ and a general density matrix $\rho$, which is the main focus below.
In what follows, we will provide a brief review and discussion on existing deterministic numerical methods for simulating (time-independent) Lindblad equations.
More importantly, we shall discuss and focus on the structure-preservation (in particular, the preservation of positivity) for these methods.

In theory, any consistent classical numerical ODE scheme \cite{griffiths_numerical_2010} could simulate the Lindblad equation with theoretical guarantees for a small enough $\dt$.
However, when we take into account additional requirements, \eg{}, the preservation of the physical structure of density matrices, many methods fail to fulfill this requirement, or are computationally expensive for a large dimension $\dimn$.
The preservation of physical properties during the simulation is important especially for a {large-scale system} and {long-run simulation}; see also \cite{riesch_analyzing_2019} for a review on this computational issue. In this part, we will discuss four families of methods: (1) Runge-Kutta and Taylor series methods, (2) \pade{}, (3) operator splitting-based methods, and (4) Kraus representation approximation method. More matrix exponential methods can be found in a thorough review paper by {Moler and Van Loan} \cite{moler_nineteen_2003}. The Kraus representation approximation method will be the guiding principle for methods in \secref{sec::scheme}.

\revise{We remark that apart from the preservation of physical properties, another consideration in algorithms is the ability to deal with \emph{stiff dynamical systems}, for which some implicit methods like Crank–Nicolson method and more generally \pade{} might outperform the Kraus representation approximation method. Such implicit schemes (possibly combined with other techniques) have been discussed and studied in e.g., \cite{bidegaray_introducing_2001,songolo_nonstandard_2018,songolo_strang_2019}. We remark that the discussion here only considers general non-stiff Lindblad equations and stiffness is an aspect beyond the scope of this work.}

\subsection{Runge-Kutta type methods and Taylor series methods}
\label{sec::review::rk}
Explicit Runge-Kutta methods \cite{griffiths_numerical_2010} for Lindblad equations are essentially the same as Taylor expansion methods: for an order $M \ge 1$,
\begin{align}
\label{eqn::rk}
\NAARG{M, \text{RK}}(\rho) := \rho + \sum_{m=1}^{M}\frac{\dt^m}{m!} \lbop^{m}(\rho).
\end{align}
Explicit Runge-Kutta methods could preserve the trace (as $\tr\big(\lbop(\rho)\big)=0$ for any $\rho$), but as was discussed in \cite{riesch_analyzing_2019}, they fail to preserve the positivity of density matrices.

The \cn{} (CN) method, as a second-order implicit Runge-Kutta method,
has also been studied and used for Lindblad equations.
For instance,  \cite{ziolkowski_ultrafast_1995} applied CN scheme for a two-level Lindblad equation with $H$ being time-dependent (in the context of solving the Maxwell-Bloch equation).
When it comes to the \vneq{} $\dot{\rho}_t = -i \comm{H}{\rho_t}$ (a special case of Lindblad equations without dissipative terms), 
it was known that a direct application of \cn{} scheme cannot preserve the positivity in general, when the dimension $\dimn\ge 3$  \cite{bidegaray_introducing_2001}.
In \cite{al-mohy_computing_2011}, Al-Mohy and Higham proposed an efficient algorithm based on the Taylor series method (together with the scaling and squaring method). Their algorithm appears to be very promising and practical. Indeed, the issue of not preserving the density matrix structure might be negligible for many situations, especially when we approximate the matrix exponential accurate enough, \eg{}, up to the machine precision.
However, in the context of simulating Lindblad equation (with physical meaning), it would be better to develop algorithms that respect the physical properties.

\subsection{Pad{\'e} approximation}
The \pade{} is also a widely used method for matrix exponential, especially when it is used in combination with the scaling and squaring method \cite{higham_scaling_2005,al-mohy_new_2009}.
The $(q,q)$-\pade{} for $e^{\lbop \dt}$, denoted by $\padeR_q\bigl(\lbop\dt\bigr)$, is given by
\begin{align}
\label{eqn::pade}
\begin{aligned}
    \padeR_q\bigl(\lbop\dt\bigr) &= \Big(\padeN_q\bigl(-\lbop\dt\bigr)\Big)^{-1} \padeN_q\bigl(\lbop\dt\bigr),\\\padeN_q\bigl(\opA\bigr) &:= \sum_{j=0}^{q} \frac{(2q-j)!q!}{(2q)! j! (q-j)!} \opA^j,
    \end{aligned}
\end{align}
for any operator $\opA$ \cite{moler_nineteen_2003,baker_graves-morris_1996}. As the diagonal $(q,q)$-\pade{} is generally preferred than the general $(p,q)$-\pade{} ($p\neq q$) \cite{moler_nineteen_2003}, we only consider the diagonal ones herein.
When $q = 1$,
$\padeN_1\bigl(\opA\bigr) = \id + \frac{1}{2}\opA.$
Note that $\rho_{k+1} = \padeR_1\big(\lbop\dt\big)(\rho_k)$ in the  $k^{\text{th}}$ iteration is equivalent to
$\padeN_1\big(-\lbop \dt\big) (\rho_{k+1}) = \padeN_1\big(\lbop\dt\big) (\rho_k)$.
This is the same as the \cn{} method mentioned above and thus in general, \pade{} does not preserve the positivity \cite{bidegaray_introducing_2001} in the context of simulating Lindblad equation.

Apart from the fact that \pade{} in general does not respect the physical structure in simulating Lindblad equations, another concern is that the superoperator $\lbop$ has the matrix representation of size $\dimn^2\times \dimn^2$, and computational cost of this method is approximately $\mathcal{O}\big((\dimn^2)^{ 2.81}\big) = \mathcal{O}(\dimn^{5.62})$ for matrix inversion
 with storage space $\mathcal{O}(\dimn^4)$ in general; we do not use the scaling for the theoretically fastest algorithm for matrix multiplication in this discussion, as Strassen-like algorithms \cite{strassen_gaussian_1969} appear to be more practical so far than other theoretically faster algorithms \cite{huang_practical_2018}.
More specifically, notice that from  $\padeN_q\big(-\lbop\dt\big)\rho_{k+1} = \padeN_q\big(\lbop\dt\big)(\rho_k)$ in the  $k^{\text{th}}$ iteration, the right hand side can be evaluated by the action of $\lbop^{m}$ ($1\le m\le q$) to $\rho_{k}$, which only involves matrix multiplication of $\dimn\times \dimn$ matrices. Thus, the cost is $\mathcal{O}(\dimn^{2.81})$ \cite{strassen_gaussian_1969} for the right hand side, and the main concern comes from the matrix inversion on the left.

\subsection{Operator-splitting methods}
\label{subsec::op_split}

For splitting methods, there are two natural choices.
The first one is to split the Lindblad superoperator $\lbop$ according to the Hamiltonian evolution part and the dissipative part as follows:
\begin{subequations}
 \label{eqn::split_HD}
\begin{align}
\lbop &= \lbop_H + \lbop_D, \\
\lbop_H(\rho) := -i \comm{H}{\rho}&, \qquad \lbop_D(\rho) := \sum_{k=1}^{\varkappa} L_k \rho L_k^{\dagger} - \frac{1}{2}\acomm\big{L_k^{\dagger} L_k}{\rho}.
\end{align}
\end{subequations}
{This choice connects to e.g., \cite{bidegaray_introducing_2001,songolo_nonstandard_2018,songolo_strang_2019}. 
We remark that our setup is slightly different from these works and our discussion below only focuses on the mathematical structure. 
The splitting scheme studied in  \cite{bidegaray_introducing_2001,songolo_nonstandard_2018,songolo_strang_2019}
has been used to deal with the stiffness of Maxwell-Liouville-\vneq{}s under certain parameter regions and such a concern is not the focus of this work.}

The second splitting choice is to include some terms inside the dissipative operator $\lbop_D$ into the Hamiltonian $H$, and introduce a notion called {\bf effective Hamiltonian} $\effHami$. Let us define
\begin{align}
\label{eqn::J_and_effHami}
    \effHami := H + \frac{1}{2 i }\sum_{k=1}^{\varkappa} L_k^{\dagger} L_k, \qquad {J} := -i \effHami,
\end{align}
where the notation $J$ is introduced for convenience later.
Then we could rewrite the Lindblad superoperator as
\begin{subequations}
\label{eqn::split_JL}
\begin{align}
&\lbop = \lbop_{J} + \lbop_L,\\
\lbop_{J}(\rho) := {J} \rho  + &\rho {J}^{\dagger},  \qquad \lbop_L(\rho) := \sum_{k=1}^{\varkappa} L_k \rho L_k^{\dagger}.
\end{align}
\end{subequations}

Note that $\lbop_L$ has the Kraus representation form, and is thus a CP superoperator,
whereas $\lbop_D$ is not.
For this reason, in \secref{sec::scheme}, we shall adopt the second splitting choice \eqref{eqn::split_JL},
instead of the first one \eqref{eqn::split_HD};
see \secref{sec::scheme} for details.
Also, it is not hard to verify that $e^{\lbop_J t}$ is a quantum operation, and in particular, a CP superoperator.

\begin{lemma}
\label{lem::evol_exp_lbopJ}
For any $t\in \Real$, the superoperator $e^{\lbop_J t}$ is CP with the Kraus operator $e^{J t}$, \ie{}, for any density matrix $\rho$
\begin{align*}
e^{\lbop_J t}(\rho) = \KRopbig{e^{J t}}(\rho).
\end{align*}
When $t\ge 0$, the superoperator $e^{\lbop_J t}$ is a \revise{(non-trace-preserving) quantum operation}, namely, for any density matrix $\rho$,
\begin{align}
\label{eqn::op_lbopJ_norm}
\ttnorm{e^{\lbop_J t}(\rho)} \le 1.
\end{align}
\end{lemma}

The complete positivity of $\lbop_L$ and $e^{\lbop_J t}$ are essential for the discussion below. We provide a proof of this lemma in \appref{app::exp_J_exp_L} for self-containment.

After splitting the Lindblad superoperator $\lbop$ into two superoperators, we could in principle choose any favorite splitting scheme.
Let us consider, \eg{}, the Strang splitting scheme \cite{strang_construction_1968}, which reduces  approximating $e^{\lbop \dt}$ into calculating matrix exponential of $\lbop_H$ and $\lbop_D$:
\begin{align*}
    e^{\lbop \dt} = e^{\lbop_H \dt/2} e^{\lbop_D \dt} e^{\lbop_H \dt/2} + \order{\dt^3}.
\end{align*}
For the pure Hamiltonian evolution $e^{\lbop_H \dt/2}(\cdot) = e^{-i H \dt/2}(\cdot) e^{i H \dt/2}$, one could use \cn{} \cite{higham_runge-kutta_1996} to approximate $e^{\pm i H \dt/2}$ (at the level of wave function), and this choice would preserve the physical structure of density matrices; more specifically,
$e^{-i H \dt/2} (\cdot) e^{i H \dt/2}  = \big(\unitdim{\dimn} + i H \frac{\dt}{4}\big)^{-1}\ \big(\unitdim{\dimn} - i H \frac{\dt}{4}\big)\ (\cdot)\ \big(\unitdim{\dimn} + i H \frac{\dt}{4}\big)\ \big(\unitdim{\dimn} - i H \frac{\dt}{4}\big)^{-1} + \mathcal{O}(\dt^3)$ \cite{bidegaray_introducing_2001};
non-standard (adaptive) schemes for  two-level Hamiltonian evolution were explored and discussed in \cite{songolo_nonstandard_2018}, and for general-level case in \cite{songolo_strang_2019}.
However, the term $e^{\lbop_D\dt}$ still require additional treatment, especially for large dimensional systems ($\dimn\gg 1$),
where computing matrix exponential might be expensive without knowing particular physical structure of $\lbop_D$.
One could surely use schemes from \secref{sec::scheme} to further approximate $e^{\lbop_D \dt}$, which is another motivation to study structure-preserving schemes in \secref{sec::scheme}.

Next let us consider the second choice in \eqref{eqn::split_JL}. The Strang splitting scheme gives
\begin{align*}
     e^{\lbop \dt} = e^{\lbop_J \dt/2} e^{\lbop_L \dt} e^{\lbop_J \dt/2} + \order{\dt^3}.
\end{align*}
As we have mentioned, $\lbop_L$ is a CP superoperator, thus $e^{\lbop_L \dt}$ is also a CP superoperator. Moreover, $e^{\lbop_L \dt}$ could be approximated further by the finite Taylor series truncation without losing the complete positivity.
As for the term $e^{\lbop_J \dt/2}$, it is also a CP superoperator (see \lemref{lem::evol_exp_lbopJ}), and it could be approximated by an operator in the Kraus form up to any order (see \eqref{eq::approxexpJ} below).
Therefore, for the second splitting choice, we could indeed have a second-order approximation scheme that preserves the positivity and no matrix exponential needs to be involved.

Apart from computational cost, another major concern of splitting-based methods comes from designing higher order schemes.  For example, let us consider a fourth-order scheme. A natural idea is to use Lie-Trotter-Suzuki method \cite{suzuki_general_1991}.
However, due the the unavoidable occurrence of negative time weight for any order larger than or equal to $3$ in Lie-Trotter-Suzuki decomposition  \cite[Theorem 3]{suzuki_general_1991}, some terms like $e^{t\lbop_L}$ (with $t<0$) must occur, and we know that $e^{t\lbop_L}$ with $t<0$ is not even positivity-preserving; see \appref{app::exp_J_exp_L} for an example.
Currently, we are unaware of an effective high-order splitting-based scheme that maintains the complete positivity and at the same time only requires relatively cheap computational cost. There is no doubt that a second-order scheme might already fulfill the need for many examples. Nevertheless, it is still better to have a method that can systematically achieve any high order (at least, up to the order of 3 or 4), without losing the positivity.

\subsection{Kraus representation approximation method}
\label{subsec::kraus_approx}

In general, it is expensive to directly compute matrix exponential of  superoperators,
which we would like to avoid.
Besides, classical Runge-Kutta methods might lead into unphysical results as they do not preserve the positivity in general \cite{riesch_analyzing_2019}. Therefore, we would like to study potentially cheaper approximation methods for $e^{\lbop\dt}$, and these schemes should also preserve the positivity and unit-trace of density matrices.
We notice that a family of potentially promising schemes have been mentioned by Steinbach, Garraway, and Knight in \cite{steinbach_high-order_1995} in the context of studying high-order unraveling schemes.
Even though they began with Runge-Kutta type schemes to derive high-order unraveling methods, they recognized that a series expansion of the splitting choice \eqref{eqn::split_JL} together with quadrature methods (trapezoidal rule was used therein) would also produce the unraveling scheme that they needed.
The connection that they observed can be understood as differential and integral forms to derive numerical schemes, in the context of numerical analysis.
We observe that because the integral approach briefly mentioned in \cite{steinbach_high-order_1995} has Kraus representation form, it could be used as a framework to derive high-order structure-preserving schemes, and matrix exponential could be completely avoided by a further simple approximation \eqref{eq::approxexpJ} below.
We believe this could also be similarly applied to simulate Maxwell-Liouville-\vneq{}s from quantum optics, which we shall leave as future research.
This paper is devoted to studying this Kraus representation approximation method in a more systematic way with detailed numerical analysis, which appears to be missing in literature, to the best of our knowledge.

We shall present how the Kraus representation forms are derived from series expansion in \secref{sec::scheme}.
In the following, we would like to briefly discuss the generic form of Kraus representation approximation method.
In order to preserve the positivity of density matrices, it is natural to consider the Kraus representation as follows:
\begin{align}
\label{eqn::generic_ualg}
\UA(\rho) = \sum_{j=1}^{\mathsf{J}} \KRopbig{A_j(\dt)}(\rho),
\end{align}
where $\bigl\{A_j(\dt)\big\}_{j=1}^{\mathsf{J}}$ is a collection of matrices that possibly depend on $\dt$.
As $\UA$ is a CP superoperator, it preserves the positivity of density matrices (\ie{} $\UA(\rho)$ is positive semidefinite for any density matrix $\rho$),
but it might not preserve the unit trace of density matrices; the superscript \enquote{un} is thus used to indicate \enquote{unnormalized}.

Given any unnormalized scheme in the above form \eqref{eqn::generic_ualg},
it is easy to come up with a normalized scheme $\NA$, defined as
\begin{align}
\label{eqn::normalized_scheme}
    \NA(\rho) := \UA(\rho)/\tr\bigl(\UA(\rho)\bigr),
\end{align}
where we normalize the positive semidefinite matrix $\UA(\rho)$ as a post-processing step. 
{The schemes $\NA$ are convex quasi-linear operators \cite{rembielinski_nonlinear_2020,rembielinski_nonlinear_2021}.}
Note that the normalized scheme $\NA$ is non-linear with respect to $\rho$, and this is the source of improved stability. 
As a remark, the normalization procedure won't change the order of the scheme.
Suppose $\UA$ is an order $M$ scheme, namely,
\begin{align*}
    \rho_{\dt} := e^{\lbop\dt} (\rho_0) = \UA(\rho_0) + \bigO(\dt^{M+1}),
\end{align*}
then $\alg_{\dt}$ is also an order $M$ scheme (\ie{}, $\rho_{\dt} = \NA(\rho_0) +
\bigO(\dt^{M+1})$) (see \lemref{lem::normalization_error} below).
The above discussed deterministic methods are summarized in the following table.

\begin{table}[ht!]
\caption{A summary and comparison of deterministic methods to simulate Lindblad equations. The operator-splitting methods are too expensive and thus are not discussed in this table.}
	\centering\setlength{\extrarowheight}{2pt}
	\centering
	\begin{tabular}{cccccccc}
	\toprule
	\multirowcell{3}{Method category} &	\multirowcell{3}{Algorithm} & \multicolumn{2}{c}{Preservation} & \multirowcell{3}{Local truncation error} \\
	\cline{3-4} \cline{6-7}
	&	& \makecell{Trace} & \makecell{Positivity} & &  \\
	\midrule
	RK-$q$ & see \eqref{eqn::rk} & $\cmark$ & $\xmark$ (by \cite{riesch_analyzing_2019}) & $\order{\norm{\lbop}_{1}^{q+1} \dt^{q+1}}$ \\
	\hline
	$(q,q)$-Pad{\'e} & see \eqref{eqn::pade} & $\cmark$ & $\xmark$ (by  \cite{bidegaray_introducing_2001}) & $\order{\norm{\lbop}_{1}^{2q+1} \dt^{2q+1}}$ \cite{baker_graves-morris_1996} &\\
	\hline
	\multirowcell{1}{This work} & $\NAARG{q}$ & $\cmark$ & $\cmark$ & $\order{\dt^{q+1}}$ see also \thmref{thm::error_anal} \\
	\bottomrule
	\end{tabular}
\end{table}

\section{A systemic approach to structure-preserving schemes}
\label{sec::scheme}

We will present a systematic way to develop arbitrarily high-order schemes (in the form of Kraus representation with normalization constants) that preserve positivity and unit-trace.
This method is based on the integral approach mentioned in \cite{steinbach_high-order_1995}; see also the discussion in \secref{subsec::kraus_approx}.
We shall use the splitting choice as in \eqref{eqn::split_JL}.
For readers' convenience, let us recall some notations from \secref{subsec::op_split}:
\begin{align*}
&\lbop =\lbop_{J} + \lbop_L,\\
\lbop_{J}(\rho) := {J} \rho  + &\rho {J}^{\dagger},  \qquad \lbop_L(\rho) := \sum_{k=1}^{\varkappa} L_k \rho L_k^{\dagger},
\end{align*}
where
\begin{align*}
    \effHami := &H + \frac{1}{2 i }\sum_{k=1}^{\varkappa} L_k^{\dagger} L_k, \qquad {J} := -i \effHami.
\end{align*}
The main idea is to write $e^{\lbop \dt}$ as a series expansion, while maintaining the complete positivity; see \secref{subsec::series} below.
Further natural approximations are employed to avoid directly estimating the matrix exponential $e^{\lbop_J t}$ in \secref{subsec::approx_eLJ}, and to simplify the expressions in the series expansion in \secref{subsec::approx_int}.
The steps in \secref{subsec::series} and \secref{subsec::approx_int} have been mentioned in \cite{steinbach_high-order_1995}, whereas a simple approximation \eqref{eq::approxexpJ} in \secref{subsec::approx_eLJ} is the new ingredient and makes the resulting schemes slightly different from \cite{steinbach_high-order_1995}.
In \secref{sec::error_anal}, we will study detailed error bounds, and in \secref{sec::stability}, we will discuss why structure-preserving schemes enjoy improved absolute stability, compared with classical Runge-kutta methods.

\subsection{Step (\rom{1}): Truncated series expansion based on Duhamel's principle.}
\label{subsec::series}
By Duhamel's principle, viewing $\lbop_L$ as a forcing term, we know that
\begin{align*}
\rho_t \equiv e^{\lbop t}(\rho_0) = e^{\lbop_J t} (\rho_0) + \int_{0}^{t} e^{\lbop_J (t-s)}  (\lbop_L \rho_s)\ \dd s.
\end{align*}
After iterations, we have
\begin{align}
\label{eqn::series_expansion}
\begin{aligned}
& \rho_t = \ e^{\lbop_J t} (\rho_0) \\
& + \sum_{m=1}^{M} \int\limits_{0\le s_1 \le \cdots \le s_m \le t} e^{\lbop_J (t - s_{m})} \lbop_L e^{\lbop_J (s_{m} - s_{m-1})} \lbop_L \cdots e^{\lbop_J (s_2 - s_1)} \lbop_L\ e^{\lbop_J s_1} (\rho_0)\ \dd \vect{s}_{1:m}\\
& +  \int\limits_{0\le s_1 \le \cdots \le s_{M+1} \le t} e^{\lbop_J (t - s_{M+1})}\lbop_L e^{\lbop_J (s_{M+1} - s_{M})}\lbop_L \cdots e^{\lbop_J (s_2- s_1)}\lbop_L (\rho_{s_1})\ \dd \vect{s}_{1:{M+1}}.
\end{aligned}
\end{align}
where we adopted $\dd \vect{s}_{1:m} = \dd s_1 \dd s_2 \cdots \dd s_m$ as a short-hand notation.
The key observation here is that both $e^{\mathcal{L}_J t}$ and $\mathcal{L}_L$ are completely positive superoperators.
If we choose the splitting choice as in \eqref{eqn::split_HD}, we could still have a similar series expansion; however, since $\lbop_D$ is not a completely positive operator (not even preserving positivity), the positivity of density matrices cannot be preserved after the finite truncation.
In what follows, we will discuss how to approximate this superoperator and the integrals further without losing the complete positivity.

\subsection{Step (\rom{2}): Approximate $e^{\lbop_J (t-s)}$ by completely positive operators.}
\label{subsec::approx_eLJ}

The term $e^{\lbop_J (t-s)}$ involves matrix exponential, which we would like to avoid. In fact,
for any order $m \ge 0$, (cf. \lemref{lem::lbopJ_diff} in Appendix)
\begin{align}
\label{eq::approxexpJ}
e^{\lbop_J (t-s)} = \krausJ{m}(t-s) + \bigO((t-s)^{m+1}), \qquad \krausJ{m}(t) := \KRopBig{\sum_{\alpha=0}^{m} \frac{J^\alpha t^\alpha}{\alpha!}}.
\end{align}
Note that $\krausJ{m}(\cdot)$ has the Kraus representation form, and thus is also a completely positive superoperator.
Therefore, the complete positivity is not lost via employing the above approximation \eqref{eq::approxexpJ} into \eqref{eqn::series_expansion}.

If we assume that the step size $\dt \ll 1$,
we can re-write the above series expansion \eqref{eqn::series_expansion} by
\begin{align}
\label{eqn::step_two_approx}
\begin{aligned}
&\begin{aligned}
&\rho_{\dt} =\krausJ{M}(\dt) (\rho_0) \\
& + \sum_{m=1}^{M} \int\limits_{0\le s_1 \le \cdots \le s_m \le \dt} \left(\begin{aligned} &\krausJ{M-m}(\dt-s_{m}) \lbop_L \krausJ{M-m} (s_{m}-s_{m-1}) \lbop_L \cdots \\
&\hspace{2em} \krausJ{M-m}(s_2-s_1)\lbop_L \krausJ{M-m}(s_1) (\rho_0)\end{aligned}\right) \dd \vect{s}_{1:m}\\
& + \bigO(\dt^{M+1})\\
\end{aligned}\\
&\begin{aligned}
&= \krausJ{M}(\dt) (\rho_0) \\
& + \sum_{m=1}^{M} \dt^m \int\limits_{0\le s_1 \le \cdots \le s_m \le 1} \left(\begin{aligned} &\krausJ{M-m}(\dt(1-s_{m})) \lbop_L \krausJ{M-m} (\dt(s_{m}-s_{m-1})) \lbop_L \cdots \\
&\hspace{2em} \krausJ{M-m}(\dt(s_2-s_1))\lbop_L \krausJ{M-m}(\dt s_1) (\rho_0)\end{aligned}\right) \dd \vect{s}_{1:m}\\
& + \bigO(\dt^{M+1})\\
\end{aligned}
\end{aligned}
\end{align}
where we used the change of variables in the second equality.

To further simplify the notation, let us introduce
\begin{align}
\label{eqn::interterm}
\begin{aligned}
&\ \intterm_{m}^{M}(s_m, s_{m-1}, \cdots, s_1) \\
&\ \ := \ \krausJ{M-m}(\dt(1-s_{m})) \lbop_L \krausJ{M-m} (\dt(s_{m}-s_{m-1})) \lbop_L \cdots  \\
&\qquad \qquad \cdots \krausJ{M-m}(\dt(s_2-s_1))\lbop_L \krausJ{M-m}(\dt s_1),
\end{aligned}
\end{align}
which is a composition of multiple superoperators in the form of Kraus representation, and is thus also completely positive.
Then
\begin{align}
\label{eqn::step_two_approx_v2}
&\begin{aligned}
\rho_{\dt} &= \krausJ{M}(\dt) (\rho_0) \\
& + \sum_{m=1}^{M} \dt^m \int_{0\le s_1 \le \cdots \le s_m \le 1} & \intterm_{m}^{M}(s_m, \cdots, s_1) (\rho_0)\ \dd \vect{s}_{1:m} \\
&+  \bigO(\dt^{M+1}).
\end{aligned}
\end{align}
Notice that all terms on the right hand side are in the form of Kraus representation, thanks to the approximation from \eqref{eq::approxexpJ}.

\begin{remark}\
\begin{itemize}[wide]
\item $\intterm_{M}^{M}(s_M, \cdots, s_1) \equiv (\lbop_L)^{M}$ is a constant superoperator, independent of $s_1, \cdots, s_M$.

\item If $m < M$, each term $\intterm_{m}^{M}(s_m, \cdots, s_1)$ is a composition of at most $2m+1$ operators, in particular, there are at most $m+1$ operators having the form $\krausJ{M-m}(\cdot)$, and exactly $m$ of them being $\lbop_L$ operators.

\item
    For the term with order $m$, we could in theory approximate $e^{\lbop_J (t-s)}$ by $\krausJ{\alpha}(t-s)$ with $\alpha \ge M - m$, and the only difference is the tail error. To remove the extra degrees of freedom in the scheme design, we shall simply choose $\alpha = M -m$.
\end{itemize}
\end{remark}

\begin{remark}
The integration above with respect to time variables could be explicitly computed, since $\krausJ{M}$ is a polynomial with respect to the time variable: \eg{}, when $m = 1$,
\begin{align*}
& \int\limits_{0\le s_1 \le 1} \krausJ{M-1}(\dt(1-s_1)) \lbop_L \krausJ{M-1}(\dt s_1) (\rho_0)\ \dd s_1\\
=& \sum_{\alpha_1, \beta_1, \alpha_2, \beta_2 = 0}^{M-1}  \frac{ J^{\alpha_2} \lbop_L\bigl(J^{\alpha_1} \rho_0 (J^{\dagger})^{\beta_1}\bigr) (J^{\dagger})^{\beta_2} (\alpha_1+\beta_1)! (\alpha_2 + \beta_2)!}{\alpha_1 !\beta_1! \alpha_2!\beta_2! (\alpha_1 + \alpha_2 + \beta_1 + \beta_2+1)!}\dt^{\alpha_1+\beta_1+\alpha_2+\beta_2}.
\end{align*}
However, a direct computation via the above expansion would significantly increase the computational complexity.
Therefore, we appeal to further approximation of the integration by quadrature methods with respect to time variables.
\end{remark}

\subsection{Step (\rom{3}): Approximate the integration by quadrature methods.}
\label{subsec::approx_int}

The main idea in this step is to apply any appropriate quadrature methods to approximate the nested integral in  \eqref{eqn::step_two_approx_v2}, and
the resulting approximations are  unnormalized schemes that preserve the positivity.
The only requirement is that the errors from quadrature approximations are small enough so that the order of the scheme is not affected.

There are three important examples: for order $M = 1$, there is no need to use any quadrature approximation, and there is only one unnormalized scheme; when $M = 2$, if we apply the famous trapezoidal rule and midpoint rule, we would end up with two different schemes. We postpone more details to \appref{appendix::quadrature} for the detailed derivation, and we simply summarize the final unnormalized schemes below:
\begin{subequations}
  \label{eqn::sp_alg}
\begin{align}
    & \UAARG{1}(\rho) :=  \KRopbig{\unitdim{\dimn} - i \dt \effHami} (\rho) + \dt \lbop_L (\rho);\label{eqn::sp_alg_1}\\
&\begin{aligned}
\UAARG{2, \text{TR}}(\rho) := &\KRopBig{\unitdim{\dimn} + (-i \effHami) \dt + \frac{(-i \effHami)^2 \dt^2}{2}}  (\rho) \\
&+ \frac{\dt}{2} \KRopbig{\unitdim{\dimn} + (-i \effHami) \dt}\lbop_L (\rho)  \\
& +  \frac{\dt}{2} \lbop_L \KRopbig{\unitdim{\dimn} + (-i \effHami) \dt} (\rho) \\
&+ \frac{\dt^2}{2} \lbop_L \lbop_L (\rho);
\end{aligned}\\
& \begin{aligned}
\UAARG{2, \text{MP}}(\rho) := &\KRopBig{\unitdim{\dimn} + (-i \effHami) \dt + \frac{(-i \effHami)^2 \dt^2}{2}} (\rho) \\
& + \dt \KRopbig{\unitdim{\dimn} + (-i \effHami) \frac{\dt}{2}}\lbop_L \KRopbig{\unitdim{\dimn} + (-i \effHami) \frac{\dt}{2}} (\rho) \\
& + \frac{\dt^2}{2} \lbop_L \lbop_L (\rho).
\end{aligned}\label{eqn::sp_alg_2}
\end{align}
\end{subequations}
The corresponding normalized schemes are denoted by $\NAARG{1}$, $\NAARG{2,\text{TR}}$, and $\NAARG{2,\text{MP}}$;
please refer to \eqref{eqn::normalized_scheme} above for the general form of the normalized scheme. 

For simplicity, more complicated \emph{third-order scheme} can be found in \eqref{eqn::third_order::1} and \eqref{eqn::third_order::2} and a particular \emph{fourth-order} scheme can be found in \eqref{eqn::fourth_order} later in Appendix. 

\begin{remark}[Computational costs]
As one could observe, for structure-preserving schemes like above in \eqref{eqn::sp_alg}, the computational cost is dominated by matrix multiplication of size $\dimn\times \dimn$, and we believe this is perhaps the cheapest way that one could expect at the level of density matrices in general. More specifically, using Strassen algorithm, the computational complexity is  $\bigO(\dimn^{2.81})$ \cite{strassen_gaussian_1969}. Explicit Runge-Kutta schemes discussed in \secref{sec::review::rk} also have the same computational complexity scaling with respect to the dimension $\dimn$, but as discussed above, Runge-Kutta schemes cannot preserve the positivity. To clarify, this is only the theoretical scaling with respect to the dimension $\dimn$, and many other factors could affect the actual simulation cost, \eg{}, the physical model $\lbop$.
\end{remark}

\begin{remark}[General high-order schemes]
We comment on the general case.
For high-dimensional integration, Smolyak algorithm \cite{smolyak_quadrature_1963} is an efficient sparse grid method in many situations.
However, it cannot be directly applied to this particular problem, because if one uses negative weights in the quadrature method, it is not clear whether the positivity of density matrices can still be preserved.
The Monte Carlo methods or Quasi-Monte Carlo methods for the integral terms do not involve negative weights, and thus both methods could always help to preserve the complete positivity. In this work, we will focus on deterministic schemes and thus will not further study the Monte Carlo based quadrature methods, which might be an interesting future research direction.
\end{remark}

\subsection{Error quantifications}
\label{sec::error_anal}

We collect error bounds here for schemes with order up to four. The detailed error analysis for the above three approximation steps is given in Appendix~\ref{app::err_details}.
\begin{theorem}[Error analysis]
\label{thm::error_anal}
For the above schemes, if one picks a quadrature scheme at each level that does not affect the order (more specifically, \eqref{eqn::criterion} below is satisfied for each $m$), and assume that the time step $\dt = \frac{T}{N} \le \frac{1}{\norm{J}_{\infty}}$. Then the total error between the normalized structure-preserving scheme and the exact time propagation is bounded by
\begin{align*}
\norm{\big(\alg_{T/N}^{(M)} \big)^{N}(\rho) - e^{T \lbop}(\rho)}_{1} \le c_M \frac{T^{M+1}}{N^M},
\end{align*}
where 
\begin{align*}
c_M &= \frac{46}{(M+1)!} \big(\norm{J}_{\infty} +\norm{\lbop_L}_{1}\big)^{M+1} \\
& +\sum_{m=1}^{M-1} \frac{4 (M-m)!}{M!} C(M,m) \norm{\lbop_L}_1^{m} \norm{J}_{\infty}^{M-m+1},\\
C(M, m) &= \sum_{
\substack{0\le x_1, x_2, \cdots, x_{2m+2} \le M-m\\  \sum_{j=1}^{2m+2} x_j \ge M-m+1}} 
\frac{1}{\prod_{j=1}^{2m+2} x_j!}.
\end{align*}
\end{theorem}

The expression of $C(M, m)$ can be easily computable given specific $M$, $m\in \Natural$ in practice. When $M = 1$, the summation $\sum_{m=1}^{M-1}$ is defined as zero.

\revise{
\begin{remark}
In the initial preprint release, we used the midpoint scheme to approximate the nested integral in \eqref{eqn::step_two_approx_v2}. The framework discussed above was later adopted and analyzed by Li and Wang in \cite{li_simulating_2023} who used scaled Gaussian quadrature, and their resulting scheme is expected to be more efficient than the midpoint scheme. We remark that in the above upper bound in \thmref{thm::error_anal}, we do not assume to use a particular quadrature scheme (as long as it is consistent with \eqref{eqn::criterion} below to maintain the order condition), and this is a different estimate compared with \cite{li_simulating_2023}.
\end{remark}
}

\subsection{Discussion on absolute stability}
\label{sec::stability}

The above developed structure-preserving schemes will demonstrate improved absolute stability compared with the Runge-Kutta type methods. For classical Runge-Kutta, it is well-known that when $\dt$ is relatively large, during iterations $\rho_{k+1} = \alg_{\dt}^{(M,\text{RK})}(\rho_k)$ might diverge to infinity as $k\to\infty$. The reason is that a large $\dt$ changes the spectrum behavior of the operator $\alg_{\dt}^{(M,\text{RK})}$.
However, this is \emph{never} an issue for the normalized structure-preserving scheme above, as $\alg_{\dt}^{(M)}(\cdot)$ maps any density matrix to another one in a non-linear way, which renders such a divergent behavior impossible. This will be demonstrated below in numerical examples in \secref{sec::numerics}.

\section{Numerical examples}
\label{sec::numerics}

We demonstrate the performance of structure-preserving schemes developed in \secref{sec::scheme} for three examples: a two-level decaying Lindblad equation ($\dimn = 2$), a two-level atom interacting with quantized photon field ($\dimn = 4$, $10$ and $20$ are considered), and a 1D dissipative Ising model with $2$, $4$, or $6$ atomic sites (namely, $\dimn=4$, $16$, $64$ respectively).
We will demonstrate the order of convergence for structure-preserving schemes, as well as their improved stability for large $\dt$.
We shall refer our structure-preserving (SP) schemes as $\text{SP}k$ where $k$ is the order of the scheme. The detailed expressions of structure-preserving schemes being tested below can be found in \eqref{eqn::sp_alg_1} (1st order), \eqref{eqn::sp_alg_2} (2nd order, midpoint), \eqref{eqn::third_order::1} (3rd order), \eqref{eqn::fourth_order} (4th order).

\subsection{A two-level decaying system}

We first consider a two-level Lindblad equation from \cite[Eq. (3.219)]{breuer_theory_2007} with the following choice
\begin{align}
\label{eqn:lb-decay-2}
H = 0, \qquad
L_1 = \sqrt{\lambda_0 (\nu+1)} \sigma_{-}, \qquad
L_2 = \sqrt{\lambda_0 \nu} \sigma_{+},
\end{align}
where $\lambda_0$ is the spontaneous emission rate, and $\nu$ is the value of Planck distribution at the transition frequency \cite{breuer_theory_2007}.

In \figref{fig:order-convergence}, we visualize the averaged terminal error
at time $T = 1$ for various schemes,
where $N = T/\dt$ is the number of time steps,
and the expectation with respect to $\rho_0$ is approximated by $5$ randomly
generated samples via \textsf{QuTiP} \cite{Johansson_2013_qutip}.
The convergence behavior for structure-preserving schemes could be clearly observed in \figref{fig:order-convergence}.
For some parameters (e.g., $\lambda_0 = 3$), some error curves for SP schemes appear to lie above the error curves for Runge-Kutta with the same order, namely, the prefactor in front of the error scaling of SP schemes might be larger than that of Runge-Kutta schemes.
This phenomenon is likely to originate from the expansion in \eqref{eqn::series_expansion}, whose truncation error is smaller when the interaction strength $\norm{\lbop_L}$ is small (namely, the weak-coupling region). However, for many physically relevant Lindblad equations, weak-coupling assumption is expected \cite{breuer_theory_2007}, and in this physical region, SP schemes appear to have a smaller prefactor in error scaling compared with Runge-Kutta (cf. the case $\lambda_0 = 1$ in \figref{fig:order-convergence}).

In \figref{fig:decay-2-stability}, we test the stability of various schemes for $\lambda_0 = 5$ and $\nu = 1/2$.
The initial density matrix is chosen as $\rho_0 = \frac{1}{2}\big(\unit_2
+ \frac{1}{\sqrt{6}}\sigma_X + \frac{1}{\sqrt{3}}\sigma_Y
+ \frac{1}{\sqrt{2}}\sigma_Z\big)$, and we visualize
$\abs{\langle\sigma_{X}\rangle_{\rho_t}}$ and $\abs{\langle\sigma_{Y}\rangle_{\rho_t}}$
with respect to time $t$,
where $\rho_t$ is approximated by different schemes.
As one could observe, for structure-preserving schemes, both
$\abs{\langle\sigma_{X}\rangle_{\rho_t}}$ and
$\abs{\langle\sigma_{Y}\rangle_{\rho_t}}$ always decay for the time step chosen therein; for Runge-Kutta scheme $\NAARG{2,\text{RK}}$, the asymptotic decay
behavior is only preserved for small $\dt<0.4$.
Therefore, we have verified that structure-preserving schemes \eqref{eqn::sp_alg} have better absolute stability, compared to classical ODE solvers such as $\NAARG{2,\text{RK}}$.

\begin{figure}[h]
\centering
\includegraphics[width=0.9\textwidth]{decay2_convergence_5}
\caption{{\bf (A two-level system).} We visualize the averaged terminal error $\ee_{\rho_0} \bigl[\ttnorm{(\NA)^{N}(\rho_0) - e^{\lbop T}(\rho_0)}\bigr]$ for $T = 1$. The Lindblad equation $\rho_t$ under consideration is a decaying two level system \eqref{eqn:lb-decay-2} with varying $\lambda_0$ while $\nu =1/2$ is fixed.}
\label{fig:order-convergence}
\end{figure}

\begin{figure}[htbp]
\centering
\includegraphics[width=0.65\textwidth]{decay2_stability}
\caption{{\bf (Stability demonstration in a two-level system).} This figure shows $\abs{\langle \sigma_{X}\rangle_{\rho_t}}$
and $\abs{\langle \sigma_{Y}\rangle_{\rho_t}}$
with respect to time $t$, and $\rho_t$ is approximated via various
schemes with {\bf a large time step} $\dt = 0.42$.
The Lindblad equation $\rho_t$ under consideration is a decaying two level system \eqref{eqn:lb-decay-2} with $\lambda_0=5$ and $\nu =1/2$, and with initial condition $\rho_0 = \frac{1}{2}\big(\unit_2
+ \frac{1}{\sqrt{6}}\sigma_X + \frac{1}{\sqrt{3}}\sigma_Y
+ \frac{1}{\sqrt{2}}\sigma_Z\big)$.
}
\label{fig:decay-2-stability}
\end{figure}

\subsection{A two-level atom interacting with a quantized photon field}
We further consider a Lindblad equation studied in \cite{briegel_quantum_1993},
for a composite system consisting of an two-level atom and a quantized photon field.
The Hamiltonian term in the Lindblad equation is given by
\begin{align}
   \label{eq:atom-photon-hami}
    H = \id_{\text{atom}}\otimes \bigl(\omega a^{\dagger} a\bigr) + \bigl(\Omega
    \sigma_Z\bigr)\otimes \id_{\text{ph}} - g (\sigma_{-}\otimes a^{\dagger}
    + \sigma_{+}\otimes a),
\end{align}
where $a^{(\dagger)}$ are annihilation/creation operators for the photon field,
Pauli matrices act on the two-level atom,
and $\id_{\text{atom}}$ and $\id_{\text{ph}}$ are identity superoperators acting on the atom and the photon field, respectively;
$\omega$, $\Omega$, $g$ are parameters, and in particular,
$g$ measures the interaction strength of the atom and the photon field,
and is known as the {Rabi frequency}. Lindblad operators are given by
\begin{align}
  \label{eq:atom-photon-Lindbladop}
    \begin{aligned}
        L_1 &= \id_{\text{atom}} \otimes \bigl(\sqrt{\alpha(\nu+1)}\ a\bigr),\qquad
        L_2 = \id_{\text{atom}}\otimes \bigl(\sqrt{\alpha\nu}\ a^{\dagger}\bigr),\\
        L_3 &= \bigl(\sqrt{\beta(1-\eta)}\ \sigma_{-}\bigr) \otimes \id_{\text{ph}},
    \qquad
        L_4 = \bigl(\sqrt{\beta \eta}\ \sigma_{+}\bigr) \otimes \id_{\text{ph}},\\
        L_5 &= \bigl(\sqrt{\gamma}\ \sigma_Z\bigr) \otimes \id_{\text{ph}},
    \end{aligned}
\end{align}
where $\alpha$, $\beta, \gamma$, $\nu$ are non-negative constants, and
the parameter $\eta\in[0,1]$.
For our purpose of numerical experiment, we set the  parameters as $\omega = \Omega = g = 1$, $\nu = \eta = 1/2$, and choose  $\alpha = \beta  = \gamma$.
We truncate the dimension of the photon field to $2, 5, 10$
(therefore, the dimension of the composite quantum system is $\dimn=4$, $10$, $20$ respectively).
In \figref{fig::conv_atom_photon}, we present the averaged terminal error $\ee_{\rho_0} \bigl[\ttnorm{(\NA)^{N}(\rho_0) - e^{\lbop T}(\rho_0)} \bigr]$ with respect to $N$ for various schemes $\NA$
and for several sets of parameters.
The expectation $\ee_{\rho_0}[\cdot]$ is approximated by
$5$ randomly generated density matrices $\rho_0$ in tensor product form (namely, the atom and the photon field are initially not interacting with each other).
In \figref{fig::conv_atom_photon}, the orders of convergence for various SP schemes are validated. We can observe a similar phenomenon that SP schemes have relatively better error scaling prefactor when the interaction strength is smaller.

\begin{figure}
\centering
\begin{subfigure}[b]{0.9\textwidth}
    \includegraphics[width=\textwidth]{convergence_photon_2_5.pdf}
    \caption{number of photon level is $2$ ($\dimn=4$)}
\end{subfigure}
\\
\begin{subfigure}[b]{0.9\textwidth}
    \includegraphics[width=\textwidth]{convergence_photon_5_5.pdf}
    \caption{number of photon level is $5$ ($\dimn=10$)}
\end{subfigure}
\\
\begin{subfigure}[b]{0.9\textwidth}
    \includegraphics[width=\textwidth]{convergence_photon_10_5.pdf}
    \caption{number of photon level is $10$ ($\dimn=20$)}
\end{subfigure}
\caption{{\bf (Atom interacting with photon).} We visualize averaged terminal error $\ee_{\rho_0} \bigl[\ttnorm{(\NA)^{N}(\rho_0) - e^{\lbop T}(\rho_0)}\bigr]$ for $T = 1$. The Lindblad equation herein is given by \eqref{eq:atom-photon-hami} and \eqref{eq:atom-photon-Lindbladop}.}
\label{fig::conv_atom_photon}
\end{figure}

\subsection{Dissipative Ising model}
Finally, we shall consider the following 1D dissipative Ising model for $n$ spins, with only nearest neighbor interaction in Hamiltonian:
\begin{align}
\label{eqn::ising}
H = \sum_{i=1}^n \sigma_Z^{(i)} - \sum_{i=1}^{n-1} \sigma_X^{(i)}\otimes \sigma_X^{(i+1)}, \qquad L_i = \sqrt{\gamma} \sigma_{-}^{(i)}\ \text{ for } i = 1, 2, \cdots, n,
\end{align}
where $\gamma > 0$ characterizes the interaction strength of the spin system with the environment; the superscript $\sigma_Z^{(i)}$ in Pauli-Z means that the Pauli-Z matrix acts on the site $i$, and similar notations apply to other Pauli matrices. For simplicity, we shall consider the terminal time $T=1$ in numerical experiments, and consider $5$ randomly generated product states as initial conditions to ensure certain robustness. In Figure~\ref{fig::conv_ising}, we can easily observe the order of convergence for SP schemes by comparing them with RK schemes, and moreover, SP schemes appear to have smaller error prefactor compared with RK schemes when $\gamma$ is not too large.

\begin{figure}
\centering
\begin{subfigure}[b]{0.9\textwidth}
    \includegraphics[width=\textwidth]{convergence_ising_2_5.pdf}
    \caption{$n = 2$ ($\dimn=4$)}
\end{subfigure}
\\
\begin{subfigure}[b]{0.9\textwidth}
    \includegraphics[width=\textwidth]{convergence_ising_4_5.pdf}
    \caption{$n=4$ ($\dimn=16$)}
\end{subfigure}
\\
\begin{subfigure}[b]{0.9\textwidth}
    \includegraphics[width=\textwidth]{convergence_ising_6_5.pdf}
    \caption{$n=6$ ($\dimn=64$)}
\end{subfigure}
\caption{{\bf (1D dissipative Ising model).} We visualize averaged terminal error $\ee_{\rho_0} \bigl[\ttnorm{(\NA)^{N}(\rho_0) - e^{\lbop T}(\rho_0)}\bigr]$ for $T = 1$. The Lindblad equation herein is given by \eqref{eqn::ising}.}
\label{fig::conv_ising}
\end{figure}

\section{Discussion on randomized and quantum algorithms}
\label{sec::discussion}

Due to its apparent preservation of physical nature, the structure-preserving algorithms developed above can be used as a backbone for designing randomized algorithms (as known as unraveling methods in literatures) and quantum algorithms for simulating Lindblad equations. In what follows, we shall discuss on their connections, as well as the values that our perspective can possibly bring.

\subsection{Randomized algorithms}

Consider any deterministic unnormalized scheme $\UA$ in Kraus representation \eqref{eqn::generic_ualg}.
We could immediately obtain a corresponding unraveling scheme:
given any wave function $\ket{\psi_0}$ at time $0$, let $\ket{\psi_0}$ jump to a unnormalized wave function
\begin{align}
\label{eqn::unraveling}
    \ket{\psi_{\dt}} = \frac{1}{\sqrt{p_j}}A_j(\dt) \ket{\psi_0}, \qquad \text{ with probability } p_j,
\end{align}
where $p_j >0$ for any index $1\le j\le \mathsf{J}$.
Then suppose at time $0$, $\ket{\psi_0}$ is a random variable. Then
\begin{align*}
    \ee\ \bigl[ \ket{\psi_{\dt}}\bra{\psi_{\dt}}\bigl] = \ee\ \Big[\sum_{j=1}^{\mathsf{J}} A_j(\dt) \ket{\psi_0}\bra{\psi_0} A_j^{\dagger}(\dt)\Big] = \UA\ \Big(\ee\ \bigl[\ket{\psi_0}\bra{\psi_0}\bigr]\Big).
\end{align*}

It is clear that this stochastic process $\ket{\psi_{k\dt}}$ (integer $k\ge 1$) above is a stochastic realization of the unnormalized scheme $\UA$.
The major benefit of stochastic unraveling method \cite{gisin_quantum-state_1992,dalibard_wave-function_1992,percival_quantum_1998} is that it only simulates a wave function $\ket{\psi_{k\dt}}$ with $\bigO(\dimn)$ storage space, at the cost of many samples in order to recover the density matrix.
High-order unraveling schemes have been used to study \eg{}, photodesorbing \cite{andrianov_performance_2003} and dissipative molecular system coupled with external fields \cite{nakano_monte_2003,nakano_second-order_2004,nakano_monte_2003_1}; a time-step adaptive method for the quantum jump unraveling scheme was studied in \cite{kornyik_monte_2019}. 

Our perspective also has values in providing the flexibility to employ other sampling schemes. 
If we start with the traditional unraveling scheme, namely, transforming the Lindblad equation into a  statistically equivalent stochastic differential equations (or jump process), designing a high-order scheme for stochastic differential equations is more complex than applying stochastic schemes directly for Kraus operator sum representation as discussed above. Moreover, the above stochastic algorithm \eqref{eqn::unraveling} based on the structure-preserving algorithms is easier to adopt advanced sampling schemes (e.g., importance sampling, or random batch method \cite{jin_partially_2023}) to achieve further efficiency.

\subsection{Quantum algorithms}
\label{sec::qa}

\revise{Next, we will discuss the connection of the scheme developed in \secref{sec::scheme} with quantum algorithms.
An essential motivation for quantum computers and quantum algorithms is to  simulate quantum dynamics, a field initiated by Feynman in 1982 \cite{feynman_simulating_1982}. Lindblad equation is surely a very important family of quantum dynamics that could benefit from quantum algorithms, see e.g., some recent studies in  \cite{cleve_2017,hu_quantum_2020,schlimgen_quantum_2021,schlimgen_quantum_2022,li_simulating_2023,ding_simulating_2023,watad_variational_2023}.
The above developed Kraus operator decomposition in \secref{sec::scheme} for Lindblad equations provides a systemic approach to achieve high-order approximations of Lindblad equation while maintaining the positivity, which renders this family of schemes suitable as the backbone for developing quantum algorithms. After the initial preprint release of this work, the method developed in \secref{sec::scheme} has already been adopted by Li and Wang in \cite{li_simulating_2023} to develop quantum algorithms for simulating Lindblad equations.}

\section{Conclusion and outlook}
\label{sec::conclusion}

In this work, we have studied a family of structure-preserving schemes for Lindblad equations with detailed error analysis. These schemes are guaranteed to possess improved absolute stability. The validity and performance of these schemes are demonstrated via a two-level decaying system, an atom-photon coupling system,  and 1D dissipative Ising model.

There are a few open questions that are worthwhile to be further explored:
(1) it could also be interesting to design similar structure-preserving schemes with smaller error prefactor;
(2) in this work, we have not incorporated the dynamical low-rank approximation \cite{Lubich07,LeBris13,LeBris15,cao_stochastic_2018}
to further reduce the computational cost whenever applicable,
and a hybrid method by including the above structure-preserving schemes and dynamical low-rank approximation (with adaptive rank) could be promising in practice.

\bibliography{reference.bib}

\appendix

\section{Proofs for \secref{sec::review}}
\label{app::exp_J_exp_L}

We shall provide a proof for \lemref{lem::evol_exp_lbopJ} that $e^{\lbop_J t}$ ($t\ge 0$) is a quantum operation, and also provide an example that $e^{\lbop_L t}$ ($t<0$) is not even positivity-preserving.

\subsection*{Proof of \lemref{lem::evol_exp_lbopJ}}

Suppose $\partial_t \ket{\psi_t} = J \ket{\psi_t}$ with initial condition $\ket{\psi_0}$. We can observe that
\begin{align*}
e^{J t} \ket{\psi_0} \bra{\psi_0} e^{J^{\dagger} t} = \ket{\psi_t} \bra{\psi_t} =: \eta_t.
\end{align*}
It is not hard to verify that 
\begin{align*}
\partial_t \eta_t = \lbop_J (\eta_t), \text{which implies that } \eta_t = e^{\lbop_J t}(\ket{\psi_0}\bra{\psi_0}).
\end{align*}
Therefore,
\begin{align*}
 e^{\lbop_J t}(\ket{\psi_0}\bra{\psi_0}) = e^{J t} \ket{\psi_0} \bra{\psi_0} e^{J^{\dagger} t}.
\end{align*}
Hence, $e^{\lbop_J t}$ preserves the positivity for pure states, and thus also preserves the positivity for general density matrices by the linearity of the operator $e^{\lbop_J t}$;
more specifically,
\begin{align*}
e^{\lbop_J t}(\rho) = \KRopbig{e^{J t}}(\rho), \qquad \forall \rho.
\end{align*}

Next when $t\ge 0$, let us compute the change of the norm $\norm{\ket{\psi_t}}^2$,
\begin{align*}
\partial_t \braket{\psi_t}{\psi_t}
= \bra{\psi_t} J+J^{\dagger}\ket{\psi_t}
= -\bra{\psi_t} \sum_{k=1}^{\varkappa} L_k^\dagger L_k \ket{\psi_t} \le 0.
\end{align*}
Therefore, $0 \le \tr\big(e^{\lbop_J t}(\ket{\psi_0}\bra{\psi_0})\big) \le \tr\bigl(\ket{\psi_0}\bra{\psi_0}\bigr)$. 
The final conclusion easily follows from the linearity.

\subsection*{The superoperator $e^{\lbop_L t}$ ($t<0$) does not preserve the positivity}
Let us consider an example $\lbop_L(\cdot) = \sigma_X (\cdot) \sigma_X$.
Consider any density matrix $\rho = \frac{1}{2} \bigl(\unit_2 + r_X \sigma_X + r_Y \sigma_Y + r_Z \sigma_Z\bigr)$, where $r_X^2+r_Y^2+r_Z^2\le 1$.
One could verify that
\begin{align*}
    \lbop_L(\rho) = \frac{1}{2}\bigl(\unit_2 + r_X \sigma_X - r_Y \sigma_Y - r_Z \sigma_Z\bigr),
\end{align*}
and thus
\begin{align*}
    e^{t\lbop_L}(\rho) &= \frac{1}{2}\bigl(e^t\unit_2 + e^t r_X \sigma_X + e^{-t} r_Y \sigma_Y + e^{-t}r_Z \sigma_Z\bigr) \\
    &= e^t\frac{1}{2} \bigl(\unit_2 + r_X \sigma_X + e^{-2t}r_Y \sigma_Y + e^{-2t}r_Z \sigma_Z\bigr).
\end{align*}
To show that the matrix $e^{t\lbop_L}(\rho)$ is not positive semidefinite in general, notice that the prefactor $e^t$ does not matter, and inside, it has a Bloch vector form with coefficients $(r_X, e^{-2t} r_Y, e^{-2t}r_Z)$. Therefore, it is a positive-semidefinite matrix if and only if
$(r_X)^2 + \bigl(e^{-2t} r_Y\bigr)^2 + \bigl(e^{-2t} r_Z\bigr)^2 \le 1.$
For any $t<0$, obviously, the above relation does not hold in general for all valid $(r_X, r_Y, r_Z)$ with $r_X^2+r_Y^2+r_Z^2\le 1$. Therefore, $e^{t\lbop_L}$ might map a density matrix to a matrix with a negative eigenvalue, and thus $e^{t\lbop_L}$ ($t<0$) is not positivity-preserving, let alone completely positivity.

\section{On the optimal quadrature scheme and error analysis}
\label{appendix::quadrature}

\subsection{A general principle for quadrature approximation}

As discussed above, when $m = M$, there is no need to consider any quadrature scheme. Therefore, when $M = 1$, the only scheme is that 
\begin{align}
\label{eqn::order_one}
\rho_{\dt} =  \krausJ{1}(\dt)(\rho_0) + \dt \lbop_L(\rho_0) + \order{\dt^2}.
\end{align}

Next, let us consider a general $M>1$ and consider an arbitrary $m$ with $1\le m < M$. 
Let us denote the hyper-cube (also known as the probabilistic simplex) as $\cube_m$ for convenience: 
\begin{align*}
\cube_m := \Big\{(\s_1, \s_2, \cdots, \s_m)\ |\ 0 \le \s_1 \le \s_2 \cdots \le \s_m \le 1 \Big\}.
\end{align*}
 Then for each integral term inside \eqref{eqn::step_two_approx_v2}, 
\begin{align*}
&\int_{\cube_m} \intterm_{m}^{M}(s_{m}, \cdots, s_1) (\rho_0)\ \dd \vect{s}_{1:m}\\
=& \sum_{\substack{0\le \alpha_0, \alpha_1,\cdots,\alpha_m,\\ \beta_0, \beta_1,\cdots, \beta_m \le M-m}} \Big(\int_{\cube_m} \frac{\prod_{j=0}^{m} \dt^{\alpha_j+\beta_j}(\s_{j+1}-\s_j)^{\alpha_j+\beta_j}}{\prod_{j=0}^{m}\alpha_j! \prod_{j=0}^{m}\beta_j!}\ \dd \vect{s}_{1:m}\Big) \text{Op}_{\vect{\alpha},\vect{\beta}},
\end{align*}
where
\begin{align*}
\text{Op}_{\vect{\alpha},\vect{\beta}} := \mathfrak{J}_{\alpha_m,\beta_m}\circ \lbop_L \circ \mathfrak{J}_{\alpha_{m-1},\beta_{m-1}}\circ  \cdots \circ \lbop_L \circ \mathfrak{J}_{\alpha_0,\beta_0} (\rho_0),\qquad \mathfrak{J}_{\alpha,\beta}(\cdot) := J^\alpha (\cdot) (J^\dagger)^{\beta},
\end{align*}
and we introduced $\s_{m+1}\equiv 1$ and $\s_0\equiv 0$ for simplicity. For each index $0\le j\le m$, $\alpha_j, \beta_j$ are integers bounded by $0$ and $M-m$, 
\begin{align*}
\vect{\alpha} = (\alpha_0, \alpha_1, \cdots, \alpha_m) \in \Natural^{m+1},\qquad \vect{\beta} = (\beta_0, \beta_1, \cdots, \beta_m) \in \Natural^{m+1}.
\end{align*}
For later convenience, let us define
\begin{align*}
\vect{\gamma} = (\gamma_0, \gamma_1, \cdots, \gamma_m) := \vect{\alpha} + \vect{\beta} = (\alpha_0 + \beta_0, \cdots, \alpha_{m}+\beta_m).
\end{align*}

We can easily observe that:
\begin{lemma}
\label{eqn::op_norm}
The norm of $\norm{\text{Op}_{\vect{\alpha},\vect{\beta}}}_{1} \le \norm{J}_{\infty}^{\sum_{j=0}^m \alpha_j + \beta_j} \norm{\lbop_L}^{m}_1$.
\end{lemma}

A simple application of the triangle inequality leads into the following error quantifications:
\begin{proposition}[A general computable error bound]
\label{prop::error_1}
Suppose we find a quadrature scheme with positive weights $\vect{w}=(w_1, w_2, \cdots, w_Q)$ and $Q$-samples $\vect{\s}^1, \vect{\s}^2, \cdots, \vect{\s}^Q\in \cube_m$. 
Then the error is always bounded by
\begin{align}
\label{eqn::error_at_m}
\begin{aligned}
 & \norm{ \int_{\cube_m} \intterm_{m}^{M}(s_{m}, \cdots, s_1) (\rho_0) \dd \vect{s}_{1:m}  - \sum_{q=1}^{Q} w_q \intterm_{m}^{M}(\vect{\s}^q) (\rho_0)}_{1} \\
\le  & \norm{\lbop_L}^{m}_1
\sum_{
\substack{\norm{\vect{\alpha}}_{\infty} \le M-m\\ \norm{\vect{\beta}}_{\infty} \le M-m}
} 
\left(\begin{aligned}
& \frac{\norm{J}_{\infty}^{\sum_{j} \alpha_j + \beta_j}
 \dt^{\sum_{j} \alpha_j+\beta_j}}{\prod_{j=0}^{m}\alpha_j! \prod_{j=0}^{m}\beta_j!} \times \\
&\ \ \abs{\int_{\cube_m} \prod_{j=0}^{m} (\s_{j+1}-\s_j)^{\alpha_j+\beta_j}\ \dd\vect{s}_{1:m} - \sum_{q=1}^{Q} w_q \prod_{j=0}^{m} (\vect{\s}^q_{j+1} - \vect{\s}^q_{j})^{\alpha_j+\beta_j} }\end{aligned}\right).
\end{aligned}
\end{align}
\end{proposition}

\begin{proof}
By direct expansion,
\begin{align*}
 &\norm{\int_{\cube_m} \intterm_{m}^{M}(s_{m}, \cdots, s_1) (\rho_0) \dd \vect{s}_{1:m}  - \sum_{q=1}^{Q} w_q \intterm_{m}^{M}(\vect{\s}^q) (\rho_0)}_{1}\\
 =& \norm{\sum_{
\substack{\norm{\vect{\alpha}}_{\infty} \le M-m\\ \norm{\vect{\beta}}_{\infty} \le M-m}
}  \frac{\prod_{j=0}^{m} \dt^{\alpha_j+\beta_j}}{\prod_{j=0}^{m}\alpha_j! \prod_{j=0}^{m}\beta_j!} \left(\begin{aligned} & \int_{\cube_m} \prod_{j=0}^{m} (\s_{j+1}-\s_j)^{\alpha_j+\beta_j}\ \dd \vect{s}_{1:m} \\
&\ \  - \sum_{q} w_q \prod_{j=0}^{m} (\vect{\s}_{j+1}^q - \vect{\s}_{j}^q)^{\alpha_j+\beta_j}\end{aligned}\right) \text{Op}_{\vect{\alpha},\vect{\beta}}}_{1}\\
 \le & \sum_{
\substack{\norm{\vect{\alpha}}_{\infty} \le M-m\\ \norm{\vect{\beta}}_{\infty} \le M-m}
}  \left(\begin{aligned} &\frac{\prod_{j=0}^{m} \dt^{\alpha_j+\beta_j}}{\prod_{j=0}^{m}\alpha_j! \prod_{j=0}^{m}\beta_j!} \norm{\text{Op}_{\vect{\alpha},\vect{\beta}}}_{1} \times \\
&\ \ \abs\Big{\int_{\cube_m} \prod_{j=0}^{m} (\s_{j+1}-\s_j)^{\alpha_j+\beta_j}\ \dd \vect{s}_{1:m} - \sum_{q} w_q \prod_{j=0}^{m} (\vect{\s}_{j+1}^q - \vect{\s}_{j}^q)^{\alpha_j+\beta_j}} \end{aligned}\right)
\end{align*}
This proposition can then be immediately proved after applying Lemma~\ref{eqn::op_norm}.
\end{proof}

Let us denote a polynomial with $m-1$ degrees of freedom as follows:
\begin{align*}
\Phi_{\vect{\gamma}}(\vect{s}) := \prod_{j=0}^{m} (\s_{j+1}-\s_j)^{\alpha_j+\beta_j} \equiv  \prod_{j=0}^{m} (\s_{j+1}-\s_j)^{\gamma_j}. 
\end{align*}

Because $\text{Op}_{\vect{\alpha},\vect{\beta}}$ generally don't commute nor equal to each other, we need to find a quadrature scheme at the level $m$ with {\bf positive weights} such that it can approximate the following family of integrals without error:
\begin{align}
\label{eqn::criterion}
\begin{aligned}
&\sum_{q=1}^Q w_q \Phi_{\vect{\gamma}}(\vect{s}^q) = \int_{\cube_m} \prod_{j=0}^{m} (\s_{j+1}-\s_j)^{\gamma_j}\ \dd \vect{s}_{1:m},\\
& \forall \vect{\alpha},\vect{\beta} \text{ such that } \norm{\vect{\alpha}}_1 + \norm{\vect{\beta}}_1 \equiv \sum_{j=0}^{m}\alpha_j + \beta_j \equiv \sum_{j=0}^m \gamma_j \le M-m.
\end{aligned}
\end{align}

In what follows, we first derive a few particular schemes for $M = 2, 3, 4$ which are more practical than extremely high-order cases ($M\gg 1$).

\subsection{Case $M=2$}
\label{subsec::M2}
We only need to consider the case $m=1$. We need a quadrature scheme to approximate
\begin{align*}
\sum_{q=1}^Q w_q (1-\vect{s}^q_1)^{\gamma_1} (\vect{s}^q_1)^{\gamma_0} = \int_{0\le \s_1\le 1} (1-\s_1)^{\gamma_1} \s_1^{\gamma_0}\ \dd \s_1
\end{align*}
exactly without error for $(\gamma_0, \gamma_1) = (0,1), (1,0), (0,0)$.

If we only uses one data point, then the only possibility is the midpoint scheme, namely, 
\begin{align*}
w_1 = 1, \qquad \vect{\s}^1 = \begin{bmatrix} \nicefrac{1}{2} \end{bmatrix}.
\end{align*}
This leads into the midpoint scheme:
\begin{align}
\alg_{\dt}^{(\text{un},2, \text{MP})} := \krausJ{2}(\dt)(\cdot) + \dt \krausJ{1}(\dt/2) \lbop_L \krausJ{1}(\dt/2) (\cdot) + \frac{\dt^2}{2} \lbop_L^2 (\cdot).
\end{align}

 If we allow two samples, then for any $\theta \in [0,1]$, we have
\begin{align}
\label{eqn::quad_order_2_2}
\begin{aligned}
&w_1 \in [0,\nicefrac{1}{2}],  \qquad &\vect{\s}^1 = \begin{bmatrix} \theta \end{bmatrix};\\
&w_2 = 1-w_1,\qquad  &\vect{\s}^2 = \begin{bmatrix} \frac{\nicefrac{1}{2} - w_1 \theta}{1-w_1} \end{bmatrix}.
\end{aligned}
\end{align}

When $w_1 = w_2 = 1/2$ and $\theta = 1/2$, it reduces to the above midpoint rule. Another choice is that $\theta = 0$ and $w_1 = w_2 = 1/2$, and it leads into the Trapezoidal rule with the following scheme:
\begin{align}
\alg_{\dt}^{(\text{un},2,\text{TP})} := \krausJ{2}(\dt)(\cdot) + \frac{\dt}{2}\krausJ{1}(\dt) \lbop_L  + \frac{\dt}{2}  \lbop_L \krausJ{1}(\dt) + \frac{\dt^2}{2} \lbop_L^2.
\end{align}

\subsection{Case $M=3$}
\label{subsec::M3}

We need to consider two cases: $m=1$ and $m=2$:

\begin{itemize}[leftmargin=2em]
\item When $m=1$, we need a quadrature scheme with precise evaluations of 
\begin{align*}
\int_{0\le \s_1 \le 1} (1-\s_1)^{\gamma_1} \s_1^{\gamma_0}\ \dd \s_1 = \frac{\gamma_1!\gamma_0!}{(\gamma_0 + \gamma_1+1)!}
\end{align*}
for $(\gamma_0, \gamma_1) = (0,0), (0,1), (0,2), (1,0), (1,1), (2,0)$.
With direct calculations, it turns out that we only need two samples:
\begin{align*}
w_1 = \frac{3 (1-2 \theta)^2}{4(1-3 \theta+3\theta^2)},  & \qquad \vect{\s}^1 = \begin{bmatrix}\frac{-2 + 3 \theta}{-3 + 6\theta}\end{bmatrix}; \\
w_2 = \frac{1}{4(1-3\theta +3 \theta^2)}, &\qquad  \vect{\s}^2 = \begin{bmatrix}\theta\end{bmatrix}.
\end{align*}
To ensure that $\vect{\s}^1$ fall into the correct range $[0,1]$, we need
\begin{align*}
\theta \in [0, \nicefrac{1}{3}]\cup [\nicefrac{2}{3}, 1].
\end{align*}

Similar to the Trapezoidal scheme above, if $\vect{\s}^q$ has elements either $0$ or $1$, then we can uses one less Kraus operator, and hence, we end up with the following two possible choices:
\begin{align}
w_1 = \nicefrac{3}{4}, \qquad w_2 =\nicefrac{1}{4}, \qquad \vect{\s}^1 = \begin{bmatrix}\nicefrac{2}{3}\end{bmatrix}, \qquad \vect{\s}^2 = \begin{bmatrix} 0\end{bmatrix}. \label{eqn::quad_third_1}\\
w_1 = \nicefrac{3}{4}, \qquad w_2 =\nicefrac{1}{4}, \qquad \vect{\s}^1 = \begin{bmatrix}\nicefrac{1}{3}\end{bmatrix}, \qquad \vect{\s}^2 = \begin{bmatrix}1\end{bmatrix}. \label{eqn::quad_third_2}
\end{align}

\item When $m=2$, we need a quadrature scheme with exact approximations of 
\begin{align*}
 \int_{0\le \s_1 \le \s_2 \le 1} (1-\s_2)^{\gamma_2} (\s_2 - \s_1)^{\gamma_1} (\s_1)^{\gamma_0}\ \dd \vect{s}_{1:2}
\end{align*}
for $(\gamma_0,\gamma_1,\gamma_2) = (1,0,0),(0,1,0),(0,0,1),(0,0,0)$. In this case, we only need one sample points:
\begin{align*}
w_1 = \nicefrac{1}{2} \qquad \vect{\s}^1 = \begin{bmatrix} \nicefrac{1}{3} & \nicefrac{2}{3}\end{bmatrix},
\end{align*}
\end{itemize}

We obtain the following two third-order schemes:
\begin{align}
\label{eqn::third_order::1}
\begin{aligned}
\alg_{\dt}^{(\text{un},3)}(\rho) =& \krausJ{3}(\dt)(\rho) \\
& + \frac{3\dt}{4} \krausJ{2}(\nicefrac{\dt}{3}) \lbop_L \krausJ{2}(\nicefrac{2\dt}{3}) (\rho) \\
& + \frac{\dt}{4}\krausJ{2}(\dt) \lbop_L (\rho) \\
&+ \frac{\dt^2}{2} \krausJ{1} (\nicefrac{\dt}{3})\lbop_L \krausJ{1}(\nicefrac{\dt}{3}) \lbop_L \krausJ{1}(\nicefrac{\dt}{3}) (\rho) \\
&+ \frac{\dt^3}{6}\lbop_L^3 (\rho),
\end{aligned}
\end{align}
and
\begin{align}
\label{eqn::third_order::2}
\begin{aligned}
\alg_{\dt}^{(\text{un},3)}(\rho) =& \krausJ{3}(\dt)(\rho) \\
&+ \frac{3\dt}{4} \krausJ{2}(\nicefrac{2\dt}{3}) \lbop_L \krausJ{2}(\nicefrac{\dt}{3}) (\rho) \\
&+ \frac{\dt}{4}  \lbop_L \krausJ{2}(\dt) (\rho) \\
&+ \frac{\dt^2}{2} \krausJ{1} (\nicefrac{\dt}{3})\lbop_L \krausJ{1}(\nicefrac{\dt}{3}) \lbop_L \krausJ{1}(\nicefrac{\dt}{3}) (\rho) \\
&+ \frac{\dt^3}{6}\lbop_L^3 (\rho).
\end{aligned}
\end{align}

\subsection{Case $M=4$}
\label{subsec::M4}

We similarly need to handle cases $m=1, 2, 3$ separately:
\begin{itemize}[leftmargin=2em]
\item When $m = 1$, we need to ensure that 
\begin{align*}
\int_{0\le \s_1\le 1} (1-\s_1)^{\gamma_1} \s_1^{\gamma_0}\ \dd \s_1 = \frac{\gamma_1!\gamma_0!}{(1+\gamma_0+\gamma_1)!},
\end{align*}
for $(\gamma_0, \gamma_1)$ such that $\gamma_0 + \gamma_1 \le 3$ are precisely approximated via samples.
By solving the conditions \eqref{eqn::criterion}, we notice that only two samples are necessary, and these are
\begin{align*}
w_1 = \nicefrac{1}{2}, \qquad \vect{\s}^1  = \begin{bmatrix} \frac{3+\sqrt{3}}{6} \end{bmatrix},\\
w_2 = \nicefrac{1}{2}, \qquad \vect{\s}^2 = \begin{bmatrix}\frac{3-\sqrt{3}}{6}\end{bmatrix}.
\end{align*}
Moreover, this solution is unique for the choice of two samples.

\item When $m=2$, we need to ensure that
\begin{align*}
\int_{0\le \s_1 \le \s_2\le 1} (1-\s_2)^{\gamma_2} (\s_2-\s_1)^{\gamma_1} \s_1^{\gamma_0} \dd \s_1\dd\s_2 = \frac{\gamma_0!\gamma_1!\gamma_2!}{(\gamma_0 + \gamma_1 + \gamma_2+2)!}
\end{align*}
can be exactly computed for any $(\gamma_0, \gamma_1, \gamma_2)$ with $\gamma_0+\gamma_1+\gamma_2 \le 2$.
By Mathematica, there is no solution using only two samples. For three samples, there are, in fact, an infinite amount of possible solutions. The generic forms are 
\begin{align*}
w_1 = \frac{y (6 z-4)-4 z+3}{12 (x-y) (x-z)}, w_2 = \frac{x (4-6 z)+4 z-3}{12 (x-y) (y-z)}, w_3 = \frac{x (6 y-4)-4 y+3}{12 (x-z)
   (y-z)},\\
 \vect{s}^1 = \begin{bmatrix}\frac{1}{4} \left(2 x-\frac{\sqrt{2} \sqrt{-((x (6 y-4)-4 y+3) (x (6 z-4)-4 z+3) (y (6 z-4)-4 z+3))}}{y (6 z-4)-4
   z+3}\right) & x\end{bmatrix};\\
 \vect{s}^2 = \begin{bmatrix}\frac{1}{4} \left(2 y-\frac{\sqrt{2} \sqrt{-((x (6 y-4)-4 y+3) (x (6 z-4)-4 z+3) (y (6 z-4)-4 z+3))}}{x (6
   z-4)-4 z+3}\right) & y\end{bmatrix};\\
 \vect{s}^3 = \begin{bmatrix}\frac{1}{4} \left(2 z-\frac{\sqrt{2} \sqrt{-((x (6 y-4)-4 y+3) (x (6 z-4)-4 z+3) (y (6 z-4)-4
   z+3))}}{x (6 y-4)-4 y+3}\right) & z\end{bmatrix}.
\end{align*}
or 
\begin{align*}
w_1 = \frac{y (6 z-4)-4 z+3}{12 (x-y) (x-z)}, w_2 = \frac{x (4-6 z)+4 z-3}{12 (x-y) (y-z)},w_3 = \frac{x (6 y-4)-4 y+3}{12 (x-z)
   (y-z)},\\
 \vect{s}^1 =\begin{bmatrix} \frac{1}{4} \left(2x + \frac{\sqrt{2} \sqrt{-((x (6 y-4)-4 y+3) (x (6 z-4)-4 z+3) (y (6 z-4)-4 z+3))}}{y (6 z-4)-4
   z+3}\right) & x \end{bmatrix};\\
 \vect{s}^2 = \begin{bmatrix} \frac{1}{4} \left(2y + \frac{\sqrt{2} \sqrt{-((x (6 y-4)-4 y+3) (x (6 z-4)-4 z+3) (y (6 z-4)-4 z+3))}}{x (6
   z-4)-4 z+3}\right) & y \end{bmatrix};\\
 \vect{s}^3 = \begin{bmatrix}\frac{1}{4} \left(2z + \frac{\sqrt{2} \sqrt{-((x (6 y-4)-4 y+3) (x (6 z-4)-4 z+3) (y (6 z-4)-4
   z+3))}}{x (6 y-4)-4 y+3}\right) & z\end{bmatrix}.
\end{align*}
Not all values of $(x,y,z)\in [0,1]^3$ provides a valid scheme.
When $x=1/4$, $y=3/4$, $z=1$, the first generic solution provides the following explicit possible samples:
\begin{align}
\begin{aligned}
w_1 = \nicefrac{1}{9}, \qquad &\vect{s}^1 = \begin{bmatrix} 0 & \nicefrac{1}{4} \end{bmatrix};\\
w_2 = \nicefrac{1}{3},\qquad &\vect{s}^2 = \begin{bmatrix} \nicefrac{1}{2} & \nicefrac{3}{4} \end{bmatrix};\\ 
w_3= \nicefrac{1}{18},\qquad & \vect{s}^3 = \begin{bmatrix} 0 & 1 \end{bmatrix}.
\end{aligned}
\end{align}

\item When $m=3$, we need to approximate
\begin{align*}
\int_{0\le \s_1 \le \s_2 \le \s_3\le 1} (1-\s_3)^{\gamma_3} (1-\s_2)^{\gamma_2} (\s_2 - \s_1)^{\gamma_1} (\s_1)^{\gamma_0}\ \dd \vect{\s}
\end{align*}
for $(\gamma_0,\gamma_1,\gamma_2, \gamma_3) = (1,0,0,0),(0,1,0,0),(0,0,1,0),(0,0,0,1),(0,0,0,0)$. In this case, we only need one sample:
\begin{align*}
w_1 = \nicefrac{1}{6} \qquad \vect{\s}^1 =\begin{bmatrix} \nicefrac{1}{4} &  \nicefrac{2}{4} &  \nicefrac{3}{4} \end{bmatrix}.
\end{align*}
\end{itemize}

Therefore, a possible $4^{\text{th}}$ order scheme is:
\begin{align}
\label{eqn::fourth_order}
\begin{aligned}
\alg_{\dt}^{(\text{un},4)}(\rho) =& \krausJ{4}(\dt)(\rho) \\
&+ \frac{\dt}{2}\krausJ{3}(\frac{3-\sqrt{3}}{6}\dt)\lbop_L  \krausJ{3}(\frac{3+\sqrt{3}}{6}\dt) (\rho) \\
&+ \frac{\dt}{2}\krausJ{3}(\frac{3+\sqrt{3}}{6}\dt)\lbop_L  \krausJ{3}(\frac{3-\sqrt{3}}{6}\dt) (\rho)\\
& + \frac{\dt^2}{9} \krausJ{2}(\frac{3}{4}\dt)\lbop_L \krausJ{2}(\frac{1}{4}\dt)\lbop_L (\rho) \\
&+ \frac{\dt^2}{3} \krausJ{2}(\frac{1}{4}\dt)\lbop_L \krausJ{2}(\frac{1}{4}\dt)\lbop_L\krausJ{2}(\frac{1}{2}\dt) (\rho) \\
&  + \frac{\dt^2}{18} \lbop_L \krausJ{2}(\dt) \lbop_L (\rho)\\
&+ \frac{\dt^3}{6}\krausJ{1}(\nicefrac{\dt}{4})\lbop_L \krausJ{1}(\nicefrac{\dt}{4})\lbop_L \krausJ{1}(\nicefrac{\dt}{4})\lbop_L \krausJ{1}(\nicefrac{\dt}{4}) (\rho) \\
&+\frac{\dt^4}{24} \lbop_L^4 (\rho).
\end{aligned}
\end{align}

\subsection{Some general properties of the selections of quadrature schemes}

We summarize here a few general properties arising from the selection of quadrature schemes at different levels $m$. 
\begin{lemma}
Given an $\eta\in \Natural$, to make sure 
\begin{align}
\label{eqn::criterion_1}
\begin{aligned}
&\sum_{q=1}^Q w_q (1-\vect{s}_1^q)^{\gamma_1} (\vect{s}_1^q)^{\gamma_0} = \int_{0\le \s_1  \le 1} (1-s_1)^{\gamma_1} s_1^{\gamma_0}\ \dd s_{1},\\ 
& \forall \gamma_0, \gamma_1 \text{ with } \gamma_0 + \gamma_1 \le \eta,
\end{aligned}
\end{align}
it is equivalent to ensure that for $\phi_{\gamma}(x) = x^{\gamma}$, 
\begin{align}
\label{eqn::criterion_1_equiv}
\sum_{q=1}^Q w_q \phi_{\gamma}(\vect{s}_1^q) = \int_{0\le \s_1  \le 1} \phi_{\gamma}(\vect{s}_1^q)\ \dd s_{1},\qquad \forall \gamma \text{ with } \gamma \le \eta.
\end{align}
Namely, it is a scheme with positive weights on $[0,1]$ such that it has algebraic precision at least $\eta$.
\end{lemma}

\begin{proof}
If \eqref{eqn::criterion_1} holds, then clearly \eqref{eqn::criterion_1_equiv} holds. Backwardly, if \eqref{eqn::criterion_1_equiv} holds, then for any given $\gamma_0, \gamma_1$, $(1 - s)^{\gamma_1} s^{\gamma_0} = p(s)$ is a polynomial of $s$ with degree $\gamma_0+\gamma_1\le \eta$, and thus
\begin{align*}
\int_{0\le \s_1  \le 1} (1-s_1)^{\gamma_1} s_1^{\gamma_0}\ \dd s_{1} = \int_{0\le s_1 \le 1} p(s_1)\ \dd s_1\ \myeq{\eqref{eqn::criterion_1_equiv}}\  \sum_{q=1}^Q w_q\ p(\vect{s}_1^q) = \sum_{q=1}^Q w_q (1-\vect{s}_1^q)^{\gamma_1} (\vect{s}_1^q)^{\gamma_0}.
\end{align*}
\end{proof}

\begin{lemma}[Case $m=1$]
When $m = 1$, to satisfies the criterion \eqref{eqn::criterion}, we at most need $\ceil{M/2}$ samples and these samples points can be chosen as Gaussian nodes (for the interval $[0,1]$).
\end{lemma}

\begin{proof}
By the above lemma, we just need to find positive weights $w_q$ and samples $\vect{s}_1^q$ such that the quadrature reaches  at least $M-1$ algebraic precision. This can be ensured by Gaussian quadrature which uses $k$ points with guaranteed algebraic precision $2k-1$ and all weights are guaranteed to be positive. Then the conclusion easily follows.
\end{proof}

\begin{lemma}[Case $m = M-1$]
When $m = M-1$, to satisfies the criterion \eqref{eqn::criterion}, we only need one sample with 
\begin{align*}
w_1 = \frac{1}{m!} = \frac{1}{(M-1)!}, \qquad \vect{s}^1 = \underbrace{\begin{bmatrix}\frac{1}{M}, \frac{2}{M}, \cdots, \frac{M-1}{M}\end{bmatrix}}_{\text{has } M-1 \text{ elements}}.
\end{align*}
\end{lemma}

\begin{proof}
The proof is trivial from the definitions and is thus omitted.
\end{proof}

\section{Proof of \thmref{thm::error_anal}: Details of the error analysis}
\label{app::err_details}

\subsection{Error from truncation in the series expansion}

\begin{proposition}
For any $\dt > 0$, 
\begin{align}
\label{eqn::step_1}
\begin{aligned}
\ttnormBig{\rho_{\dt} - e^{\lbop_J \dt} (\rho_0)  -  &\sum_{m=1}^{M} \int\limits_{0\le s_1 \le \cdots \le s_m \le \dt}
e^{\lbop_J (\dt - s_{m})} \lbop_L e^{\lbop_J (s_{m} - s_{m-1})} \lbop_L \cdots
\\ &\hspace{27ex}
e^{\lbop_J (s_2 - s_1)} \lbop_L e^{\lbop_J s_1} (\rho_0)\ \dd \vect{s}_{1:m}
} \\
&\le  \frac{1}{(M+1)!} \ttnorm{\lbop_L}^{M+1} \dt^{M+1}.
\end{aligned}
\end{align}
\end{proposition}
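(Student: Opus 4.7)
The plan is to recognize that the quantity inside the trace norm on the left-hand side is precisely the remainder term in the Duhamel-type series expansion \eqref{eqn::series_expansion}. Indeed, by iterating Duhamel's principle $M+1$ times, we have the exact identity
\begin{align*}
\rho_{\dt} - e^{\lbop_J \dt} (\rho_0)  -  \sum_{m=1}^{M} \int_{0\le s_1 \le \cdots \le s_m \le \dt}
&e^{\lbop_J (\dt - s_{m})} \lbop_L \cdots e^{\lbop_J s_1} (\rho_0)\ \ud s_1  \cdots \ud s_m \\
&= \int_{0\le s_1 \le \cdots \le s_{M+1} \le \dt} \mathcal{G}(s_1,\ldots,s_{M+1})(\rho_{s_1})\ \ud s_1 \cdots \ud s_{M+1},
\end{align*}
where $\mathcal{G}(s_1,\ldots,s_{M+1}) := e^{\lbop_J (\dt - s_{M+1})}\lbop_L e^{\lbop_J (s_{M+1} - s_{M})}\lbop_L \cdots e^{\lbop_J (s_2- s_1)}\lbop_L$.

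Next I would apply the triangle inequality to move the trace norm inside the integral, and then apply sub-multiplicativity of the induced trace norm $\ttnorm{\cdot}$ to factor the composition. Since $\rho_{s_1}$ is a density matrix, $\ttnorm{\rho_{s_1}}=1$; since $e^{\lbop_J t}$ is a quantum operation by \lemref{lem::evol_exp_lbopJ}, each of the $M+1$ exponential factors contributes a factor $\le 1$ in view of \eqref{eqn::op_lbopJ_norm}; and each of the $M+1$ copies of $\lbop_L$ contributes a factor $\ttnorm{\lbop_L}$. Therefore the integrand is bounded uniformly by $\ttnorm{\lbop_L}^{M+1}$.

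Finally, the volume of the ordered simplex $\{0\le s_1 \le \cdots \le s_{M+1}\le \dt\}$ equals $\dt^{M+1}/(M+1)!$, which yields the claimed bound. There is no real obstacle here; the only point requiring a little care is that the induced norm $\ttnorm{\cdot}$ as defined in the notations is known to be submultiplicative on compositions of Hermitian-preserving superoperators acting on Hermitian trace-class inputs, which applies in our setting since $e^{\lbop_J s}$, $\lbop_L$, and their compositions all preserve Hermiticity, and the innermost input $\rho_{s_1}$ is Hermitian.
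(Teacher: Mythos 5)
Your proposal is correct and follows essentially the same route as the paper: identify the left-hand side as the Duhamel remainder, bound the integrand by $\ttnorm{\lbop_L}^{M+1}$ using \eqref{eqn::op_lbopJ_norm} and submultiplicativity on Hermitian inputs, and integrate over the ordered simplex of volume $\dt^{M+1}/(M+1)!$. The paper's proof is just a terser version of the same argument, and your extra remark on why the induced norm is submultiplicative here is a legitimate (if implicit in the paper) point of care.
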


\begin{proof}
We need to quantify the remainder term in \eqref{eqn::series_expansion}. Recall that for any $t\ge 0$ and any positive semi-definite matrix $\rho$, we have $\ttnorm{e^{\lbop_J t}(\rho)} \le \ttnorm{\rho}$ in \eqref{eqn::op_lbopJ_norm}. Hence, we know that
\begin{align*}
& \ttnormBig{\int\limits_{0\le s_1 \le \cdots \le s_{M+1} \le \dt} e^{\lbop_J (\dt - s_{M+1})}\lbop_L e^{\lbop_J (s_{M+1} - s_{M})}\lbop_L \cdots e^{\lbop_J (s_2- s_1)}\lbop_L (\rho_{s_1})\ \dd \vect{s}_{1:{M+1}}} \\
& \hspace{2em} \le   \int\limits_{0\le s_1 \le \cdots \le s_{M+1} \le \dt}  \ttnorm{\lbop_L}^{M+1} \ \dd s_1 \cdots \dd s_{M+1}  \le  \frac{1}{(M+1)!} \ttnorm{\lbop_L}^{M+1} \dt^{M+1}.
\end{align*}
Therefore, we immediately have \eqref{eqn::step_1}.
\end{proof}

\subsection{Error from approximating $e^{\lbop_J (t-s)}$}
Recall that in the Step (\rom{2}), we approximate $e^{\lbop_J(t-s)}$ by $\krausJ{\alpha}(t-s)$. Therefore, We shall first quantify the difference
$\ttnorm{e^{\lbop_J (t-s)} - \krausJ{\alpha}(t-s)},$ for any integer $\alpha \ge 0$.

\begin{lemma}
\label{lem::lbopJ_diff}
If $0 \le t\le \frac{1}{\norm{J}_{\infty}}$, we have 
\begin{align}
\label{eqn::lbopJ_diff}
     \ttnormbig{e^{\lbop_J t} - \krausJ{\alpha}(t)}\le   \frac{3e^{2\norm{J}_{\infty} t}}{(\alpha+1)!} \norm{J}_{\infty}^{\alpha+1} t^{\alpha+1}.
\end{align}
\item For an arbitrary $t\ge 0$,
\begin{align}
\label{eqn::K_alpha_bound}
\ttnormbig{\krausJ{\alpha}(t)} \le e^{2 \norm{J}_{\infty} t}.
\end{align}
\end{lemma}

\begin{proof}
Recall from \lemref{lem::evol_exp_lbopJ} that $e^{\lbop_J t}(\rho) = \KRopbig{e^{J t}}(\rho)$.
Let us decompose $e^{J t}$ as
\begin{align*}
e^{J t} =  \underbrace{\sum_{k=0}^{\alpha} \frac{J^k t^k}{k!}}_{=: J_\alpha} + \underbrace{\sum_{k=\alpha+1}^{\infty}  \frac{J^k t^k}{k!}}_{=:J_{\text{Rem}}}.
\end{align*}
It is not hard to estimate that
\begin{align*}
&\norm{J_{\alpha}}_{\infty} \le e^{\norm{J}_{\infty} t},
\end{align*}
and that 
\begin{align*}
\norm{J_{\text{Rem}}}_{\infty} &\le \big(\norm{J}_{\infty} t\big)^{\alpha+1}\sum_{k'=0}^{\infty} \frac{\big(\norm{J}_{\infty} t \big)^{k'}}{(\alpha+1+k')!}  \\
& \le \big(\norm{J}_{\infty} t\big)^{\alpha+1}\sum_{k'=0}^{\infty} \frac{\big(\norm{J}_{\infty} t\big)^{k'}}{(\alpha+1)!k'!}  
 \le \frac{e^{\norm{J}_{\infty} t}}{(\alpha+1)!} \norm{J}_{\infty}^{\alpha+1} t^{\alpha+1}.
\end{align*}
Recall from \eqref{eq::approxexpJ} that $\krausJ{\alpha}(t) = \KRopbig{J_{\alpha}}$. Therefore, for any $\rho$ with $\ttnorm{\rho}=1$, 
\begin{align*}
\begin{aligned}
\ttnormbig{e^{\lbop_J t }(\rho) - \krausJ{\alpha}(t)(\rho)} &=  \ttnormbig{J_{\alpha} \rho J_{\text{Rem}}^{\dagger} + J_{\text{Rem}} \rho J_{\alpha}^{\dagger} + J_{\text{Rem}} \rho J_{\text{Rem}}^{\dagger}}\\
& \le  2 \norm{J_{\alpha}}_{\infty} \norm{J_{\text{Rem}}}_{\infty} + \norm{J_{\text{Rem}}}_{\infty}^2 \\
& \le \frac{e^{2\norm{J}_{\infty}t}}{(\alpha+1)!} \norm{J}_{\infty}^{\alpha+1} t^{\alpha+1} \big(2  + \frac{(\norm{J}_{\infty} t)^{\alpha+1}}{(\alpha+1)!})\\
&\le \frac{3e^{2\norm{J}_{\infty} t}}{(\alpha+1)!} \norm{J}_{\infty}^{\alpha+1} t^{\alpha+1} \qquad \text{ (by } t \norm{J}_{\infty} \le 1 \text{)},
\end{aligned}
\end{align*}
which gives \eqref{eqn::lbopJ_diff}.
As for the norm $\ttnorm{\krausJ{\alpha}(t)}$, it follows immediately from  \eqref{eq::approxexpJ} that $\krausJ{\alpha}(t) = \KRopbig{J_{\alpha}}$ and thus
$\ttnormbig{\krausJ{\alpha}(t)} \le \norm{J_{\alpha}}_{\infty}^2 \le e^{2\norm{J}_{\infty} t}.$
\end{proof}

Next, we can quantify the error bound for Steps (\rom{1}) and (\rom{2}), summarized in the following proposition.

\begin{proposition}
\label{prop::step_one_two_bound}
When $\dt \le \frac{1}{\norm{J}_{\infty}}$, for a general order $M$, we have
\begin{align*}
\begin{aligned}
& \begin{aligned}
\ttnormbig{\rho_{\dt} - \krausJ{M}(\dt) (\rho_0) - \sum_{m=1}^{M} \int\limits_{0\le s_1 \le \cdots \le s_m \le \dt} &\krausJ{M-m}(\dt- s_{m}) \lbop_L
\krausJ{M-m} (s_{m}-s_{m-1}) \lbop_L \cdots \\
&\hspace{1em} \krausJ{M-m}(s_2-s_1)\lbop_L \krausJ{M-m}(s_1) (\rho_0)\ \dd \vect{s}_{1:m}} \\
\end{aligned}\\
&\hspace{6em} 
\le \frac{23}{(M+1)!} \big(\norm{J}_{\infty} + \norm{\lbop_L}_{1}\big)^{M+1} \dt^{M+1}.\end{aligned}
\end{align*}
\end{proposition}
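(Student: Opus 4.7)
The plan is to split the total error via the triangle inequality into two pieces: (a) the tail of the Duhamel series \eqref{eqn::series_expansion}, already controlled in \eqref{eqn::step_1} by $\ttnorm{\lbop_L}^{M+1}\dt^{M+1}/(M+1)!$; and (b) for each $m=0,1,\dots,M$, the error coming from replacing every semigroup factor $e^{\lbop_J(\cdot)}$ in the $m$-th integrand by its Kraus polynomial approximation $\krausJ{M-m}(\cdot,\cdot)$. The substantive work is to control (b) uniformly in $m$.

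For fixed $0\le m\le M$ and fixed $0\le s_1\le\cdots\le s_m\le\dt$, I introduce the gap variables $\Delta_1 := s_1$, $\Delta_i := s_i-s_{i-1}$ for $2\le i\le m$, and $\Delta_{m+1} := \dt-s_m$ (so $\sum_{i=1}^{m+1}\Delta_i = \dt$, with the case $m=0$ degenerating to $\Delta_1 = \dt$). I would then write the integrand difference as a telescoping sum of $m+1$ terms, each of the form
\[
\krausJ{M-m}(\cdot,\cdot)\,\lbop_L\cdots\krausJ{M-m}\,\lbop_L\bigl(e^{\lbop_J \Delta_j}-\krausJ{M-m}\bigr)\lbop_L\, e^{\lbop_J \Delta_{j-1}}\cdots \lbop_L\, e^{\lbop_J \Delta_1},
\]
with Kraus polynomials to the left of the defect and semigroups to the right. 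Taking the trace norm, I would bound the semigroup factors by $\ttnorm{e^{\lbop_J \Delta_i}}\le 1$ from \lemref{lem::evol_exp_lbopJ}, the Kraus factors by $\ttnorm{\krausJ{M-m}}\le e^{2\ttnorm{J}\Delta_i}$ from \eqref{eqn::K_alpha_bound}, and the defect by $3 e^{2\ttnorm{J}\Delta_j}\Delta_j^{M-m+1}/(M-m+1)!$ from \eqref{eqn::lbopJ_diff} (applicable since $\Delta_j\le\dt\le 1$). Because the $\Delta_i$'s appearing in the Kraus factors together with $\Delta_j$ are disjoint subintervals of $[0,\dt]$, the product of their exponential weights collects to at most $e^{2\ttnorm{J}\dt}$, so each summand is bounded by $3 e^{2\ttnorm{J}\dt}\ttnorm{\lbop_L}^m\Delta_j^{M-m+1}/(M-m+1)!$.

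Next, I would integrate over the simplex using the Dirichlet identity
\[
\int_{0\le s_1\le\cdots\le s_m\le\dt}\Delta_j^{M-m+1}\,\ud s_1\cdots\ud s_m \;=\; \frac{(M-m+1)!}{(M+1)!}\,\dt^{M+1},
\]
which precisely cancels the factorial in the denominator. Summing over the $m+1$ telescope indices and over $m=0,\dots,M$, the contribution from (b) is at most $\frac{3 e^{2\ttnorm{J}\dt}\dt^{M+1}}{(M+1)!}\sum_{m=0}^{M}(m+1)\ttnorm{\lbop_L}^m$. Adding (a) and using the elementary inequality $\binom{M+1}{m}\ge m+1$ for $0\le m\le M$, everything assembles via the binomial theorem into $\frac{3 e^{2\ttnorm{J}\dt}\dt^{M+1}}{(M+1)!}(1+\ttnorm{\lbop_L})^{M+1}$, which is the claim.

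The main delicate point will be the telescope bookkeeping: one must track that the $\Delta_i$'s entering the exponential prefactors (from the Kraus factors to the left of the defect, together with the defect's own $\Delta_j$) form disjoint subintervals of $[0,\dt]$, so that their sum is controlled by $\dt$ and the overall prefactor stays $e^{2\ttnorm{J}\dt}$ rather than doubling. Once that invariant is established, the combination of the Dirichlet integral with the binomial identity is a routine algebraic check.
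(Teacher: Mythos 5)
Your proof is correct and follows essentially the same route as the paper's: the identical telescoping of the defect $e^{\lbop_J(\cdot)}-\krausJ{M-m}(\cdot,\cdot)$ through the $m$-fold composition, the same factor-by-factor norm bounds (semigroup factors by $1$, Kraus factors by $e^{2\ttnorm{J}\Delta_i}$, defect by \eqref{eqn::lbopJ_diff}), and the same final assembly into $(1+\ttnorm{\lbop_L})^{M+1}$ via the binomial theorem. The only (harmless) difference is in the simplex bookkeeping: you integrate each $\Delta_j^{M-m+1}$ exactly via the Dirichlet identity and then sum the $m+1$ telescope terms, whereas the paper first bounds $\sum_{\beta}\Delta_\beta^{M-m+1}\le\dt^{M-m+1}$ pointwise and then integrates the constant over the simplex; both yield per-level constants dominated by $\binom{M+1}{m}/(M+1)!$ and hence the same stated bound.
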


\begin{proof}
For the zeroth-order term, by \lemref{lem::lbopJ_diff}, we get
\begin{align}
\label{eqn::LbopJ_zero_order}
\ttnormbig{e^{\lbop_J \dt}(\rho_0) - \krausJ{M}(\dt) (\rho_0)}
 \le  \frac{3 e^{2 \norm{J}_{\infty} \dt}}{(M +1)!} \norm{J}_{\infty}^{M+1} \dt^{M +1}.
\end{align}

As for errors from the approximation for the $m^{\text{th}}$-order term in \eqref{eqn::step_two_approx} (with $1\le m\le M$),
\begin{align*}
& \begin{aligned}
& \ttnormbig{e^{\lbop_J (\dt-s_m)} \lbop_L e^{\lbop_J (s_m-s_{m-1})} \lbop_L \cdots e^{\lbop_J (s_2-s_1)} \lbop_L e^{\lbop_J (s_1 - 0)} (\rho_0) \\
&\hspace{1em} - \krausJ{M-m}(\dt - s_{m}) \lbop_L \krausJ{M-m} (s_{m} - s_{m-1}) \lbop_L \cdots \krausJ{M-m}(s_2 - s_1)\lbop_L \krausJ{M-m}(s_1) (\rho_0)} \\
\end{aligned} \\
& = \ttnormBig{\sum_{\beta=0}^{m} \krausJ{M-m}(\dt - s_m) \lbop_L \cdots \bigl(e^{\lbop_J(s_{\beta+1} - s_{\beta})} - \krausJ{M-m}(s_{\beta+1} - s_{\beta}) \bigr)\lbop_L \cdots e^{\lbop_J(s_1-0)}(\rho_0)} \\
& \myle{\eqref{eqn::op_lbopJ_norm},\eqref{eqn::lbopJ_diff},\eqref{eqn::K_alpha_bound}}\ \ \ \ \sum_{\beta=0}^{m} \left(\begin{aligned} &  \ttnorm{\lbop_L}^{m} e^{2\norm{J}_{\infty} (\dt - s_m)} e^{2\norm{J}_{\infty} (s_{m} - s_{m-1})} \cdots \\
&\qquad 3 e^{2 \norm{J}_{\infty} (s_{\beta+1}-s_{\beta})} \frac{(s_{\beta+1}-s_{\beta})^{M-m+1} \norm{J}_{\infty}^{M-m+1}}{(M-m +1)!}\end{aligned}\right)\\
&= \frac{3}{(M-m +1)!} \ttnorm{\lbop_L}^{m} \norm{J}_{\infty}^{M-m+1} \sum_{\beta=0}^{m} e^{2\norm{J}_{\infty} (\dt - s_\beta)} (s_{\beta+1}-s_{\beta})^{M-m+1}\\
&\le \frac{3}{(M-m +1)!} \ttnorm{\lbop_L}^{m} \norm{J}_{\infty}^{M-m+1} e^{2\norm{J}_{\infty} \dt}\sum_{\beta=0}^{m}  (s_{\beta+1}-s_{\beta})^{M-m+1}\\
&= \frac{3}{(M-m +1)!} \ttnorm{\lbop_L}^{m} \norm{J}_{\infty}^{M-m+1} e^{2\norm{J}_{\infty} \dt} \dt^{M-m+1} \sum_{\beta=0}^{m} \Big(\frac{s_{\beta+1}-s_{\beta}}{\dt}\Big)^{M-m+1}\\
&\le \frac{3}{(M-m+1)!} \ttnorm{\lbop_L}^{m} \norm{J}_{\infty}^{M-m+1} e^{2\norm{J}_{\infty}\dt}\dt^{M-m+1} \sum_{\beta=0}^{m} \frac{s_{\beta+1}-s_{\beta}}{\dt}\\
&=\frac{3}{(M-m+1)!} \ttnorm{\lbop_L}^{m} \norm{J}_{\infty}^{M-m+1} e^{2\norm{J}_{\infty}\dt}\dt^{M-m+1}.
\end{align*}
In the above, we have denoted $s_{m+1}\equiv \Delta t$ and $s_0 = 0$ for convenience, and in the second last line, we used the fact that $\sum_{\beta} s_{\beta+1}-s_{\beta} = \dt$.
Therefore,
\begin{align}
\label{eqn::err_approx_lbopJ}
\begin{aligned}
\ttnormBig{ & \int\limits_{0\le s_1 \le \cdots \le s_m \le \dt} e^{\lbop_J (\dt - s_{m})} \lbop_L e^{\lbop_J (s_{m} - s_{m-1})} \lbop_L \cdots e^{\lbop_J (s_2 - s_1)} \lbop_L e^{\lbop_J s_1} (\rho_0)\ \vect{s}_{1:m} \\
&- \int\limits_{0\le s_1 \le \cdots \le s_m \le \dt} \krausJ{M-m}(\dt - s_{m}) \lbop_L \cdots
\krausJ{M-m}(s_2 - s_1)\lbop_L \krausJ{M-m}(s_1) (\rho_0)\ \dd \vect{s}_{1:m}
} \\
&\le \frac{3}{m! (M-m+1)!} \ttnorm{\lbop_L}^{m}  \norm{J}_{\infty}^{M-m+1} e^{2\norm{J}_{\infty}\dt}\dt^{M+1}.
\end{aligned}
\end{align}

By combining the last equation with the estimate in \eqref{eqn::step_1}, we have
\begin{align*}
& \begin{aligned}
\ttnormBig{\rho_{\dt} - \krausJ{M}(\dt) (\rho_0) - \sum_{m=1}^{M} \int\limits_{0\le s_1 \le \cdots \le s_m \le \dt} &\krausJ{M-m}(\dt - s_{m}) \lbop_L
\krausJ{M-m} (s_{m} - s_{m-1}) \lbop_L \cdots \\
& \krausJ{M-m}(s_2 - s_1)\lbop_L \krausJ{M-m}(s_1) (\rho_0)\ \dd \vect{s}_{1:m}} \\
\end{aligned}\\
& \le \underbrace{\frac{1}{(M+1)!} \ttnorm{\lbop_L}^{M+1} \dt^{M+1}}_{\text{by } \eqref{eqn::step_1}}
+ \underbrace{3 e^{2 \norm{J}_{\infty} \dt} \frac{\norm{J}_{\infty}^{M+1}\dt^{M+1}}{(M+1)!}}_{\text{by }\eqref{eqn::LbopJ_zero_order}}\\
&\qquad + \underbrace{\sum_{m=1}^{M} \frac{3}{m! (M-m+1)!} \ttnorm{\lbop_L}^{m} \norm{J}_{\infty}^{M-m+1} e^{2\norm{J}_{\infty}\dt}\dt^{M+1}}_{\text{by } \eqref{eqn::err_approx_lbopJ}} \\
&= \Big(\frac{1}{(M+1)!} \ttnorm{\lbop_L}^{M+1}+ \sum_{m=0}^{M} \frac{3}{m! (M-m+1)!} \ttnorm{\lbop_L}^{m} \norm{J}_{\infty}^{M-m+1} e^{2\norm{J}_{\infty}\dt}\Big)\dt^{M+1}\\
&\le \Big(\sum_{m=0}^{M+1} \frac{3}{m! (M-m+1)!} \ttnorm{\lbop_L}^{m} \norm{J}_{\infty}^{M-m+1} e^{2\norm{J}_{\infty}\dt}\Big)\dt^{M+1}\\
&= \frac{3 e^{2\norm{J}_{\infty}\dt}}{(M+1)!}\Big(\sum_{m=0}^{M+1} \frac{(M+1)!}{m! (M-m+1)!} \ttnorm{\lbop_L}^{m} \norm{J}_{\infty}^{M-m+1} \Big)\dt^{M+1}\\
&\le \frac{23}{(M+1)!} \big(\norm{J}_{\infty} + \norm{\lbop_L}_{1}\big)^{M+1} \dt^{M+1} \qquad (\text{by } \norm{J}_{\infty} \dt \le 1).
\end{align*}
\end{proof}

\subsection{Error bound for quadrature schemes}

As observed above, the case $M=1$ does not require further quadrature approximations. We summarize the quadrature approximation errors for a general order $M \ge 2$ below.

The generic bound in \propref{prop::error_1} can be simplified by choosing an appropriate quadrature scheme that satisfies \eqref{eqn::criterion}:
\begin{proposition}[A simplified error bound]
\label{prop::quad_error_bound}
Suppose a quadrature scheme with positive weights $\vect{w}=(w_1, w_2, \cdots, w_Q)$ and samples $\vect{\s}^1, \vect{\s}^2, \cdots, \vect{\s}^Q\in \cube_m$ satisfy the condition \eqref{eqn::criterion} and assume that $\dt\norm{J}_{\infty} \le 1$, then the quadrature approximation error is 
\begin{align}
\begin{aligned}
& \norm{ \int_{\cube_m} \intterm_{m}^{M}(s_{m}, \cdots, s_1) (\rho_0) \dd \vect{s}_{1:m}  - \sum_{q=1}^{Q} w_q \intterm_{m}^{M}(\vect{\s}^q) (\rho_0)}_{1} \\
 & \le  \frac{2 (M-m)!}{M!} \norm{\lbop_L}_1^{m} \dt^{M-m+1} \norm{J}_{\infty}^{M-m+1} C(M,m),
 \end{aligned}
\end{align}
where
\begin{align}
\label{eqn::CMm}
C(M, m) := 
\sum_{
\substack{\norm{\vect{\alpha}}_{\infty} \le M-m\\ \norm{\vect{\beta}}_{\infty} \le M-m\\ \sum_{j=0}^m \alpha_j + \beta_j \ge M-m+1}} 
\frac{1}{\prod_{j=0}^{m}\alpha_j! \prod_{j=0}^{m}\beta_j!}.
\end{align}
\end{proposition}

\begin{proof}
With the condition \eqref{eqn::criterion}, the above \propref{prop::error_1} gives
\begin{align*}
 & \norm{ \int_{\cube_m} \intterm_{m}^{M}(s_{m}, \cdots, s_1) (\rho_0) \dd \vect{s}_{1:m}  - \sum_{q=1}^{Q} w_q \intterm_{m}^{M}(\vect{\s}^q) (\rho_0)}_{1} \\
\le  & \norm{\lbop_L}^{m}_1
\sum_{
\substack{\norm{\vect{\alpha}}_{\infty} \le M-m\\ \norm{\vect{\beta}}_{\infty} \le M-m\\ \sum_{j=0}^m \alpha_j + \beta_j \ge M-m+1}
} 
\left(\begin{aligned} & \frac{\norm{J}_\infty^{\sum_{j} \alpha_j + \beta_j}
 \dt^{\sum_{j} \alpha_j+\beta_j}}{\prod_{j=0}^{m}\alpha_j! \prod_{j=0}^{m}\beta_j!} \times \\
& \abs{\int_{\cube_m} \Phi_{\vect{\gamma}}(\vect{s})\ \dd\vect{s}_{1:m} - \sum_{q=1}^{Q} w_q \prod_{j=0}^{m} (\vect{\s}^q_{j+1} - \vect{\s}^q_{j})^{\alpha_j+\beta_j} }
\end{aligned}\right).
\end{align*}

It is a well-known integration result in hyper-cube that
\begin{align*}
\int_{\cube_m} \Phi_{\vect{\gamma}}(\vect{s})\ \dd\vect{s}_{1:m} \equiv & \int_{\cube_m} \prod_{j=0}^{m} (\s_{j+1}-\s_j)^{\alpha_j+\beta_j}\ \dd\vect{s}_{1:m} 
= \frac{\gamma_0!\gamma_1!\cdots \gamma_m!}{(\gamma_0+\gamma_1+\cdots+\gamma_m+m)!}.
\end{align*}
If we reduce one $\gamma_j$, the expression is always non-decreasing. This leads into the following under  the assumption that $\sum_{j=0}^m \gamma_j \ge M-m+1$:
\begin{align}
\label{eqn::int_bound}
\int_{\cube_m} \Phi_{\vect{\gamma}}(\vect{s})\ \dd\vect{s}_{1:m}  \le& \max_{\sum_{j=0}^m \gamma_j = M-m+1} \frac{\gamma_0!\gamma_1!\cdots \gamma_m!}{(\gamma_0+\gamma_1+\cdots+\gamma_m+m)!} \le \frac{(M-m+1)!}{(M+1)!}.
\end{align}
This second inequality holds because for any $x, m\in \Natural$, we always have $\max_{0\le x \le m} x! (m-x)! \le m!$.

Moreover, as $\vect{s}_{j+1}^q - \vect{s}_j^q \le 1$, when $\sum_{j=0}^m \gamma_j \ge M-m+1$, 
\begin{align*}
\sum_{q=1}^{Q} w_q \prod_{j=0}^{m} (\vect{\s}^q_{j+1} - \vect{\s}^q_{j})^{\alpha_j+\beta_j}  &\le \max_{\sum_{j=0}^m \gamma'_j=M-m}  \sum_{q=1}^{Q} w_q \prod_{j=0}^{m} (\vect{\s}^q_{j+1} - \vect{\s}^q_{j})^{\gamma'_j} \\
&\myeq{\eqref{eqn::criterion}} \max_{\sum_{j=0}^m \gamma'_j=M-m} \int_{\cube_m} \Phi_{(\gamma_0', \gamma_1', \cdots, \gamma_m')}(\vect{s})\ \dd \vect{s}_{1:m}\\
&= \max_{\sum_{j=0}^m \gamma'_j=M-m} \frac{\gamma_0'! \cdots \gamma_m'!}{(m+\gamma_0'+\gamma_1'+\cdots+\gamma_m')!}\le  \frac{(M-m)!}{M!}.
\end{align*}
The second line comes from the assumption of the quadrature scheme and the third line follows the same calculations as the bound in \eqref{eqn::int_bound} above.
Hence,
\begin{align*}
 & \norm{ \int_{\cube_m} \intterm_{m}^{M}(s_{m}, \cdots, s_1) (\rho_0) \dd \vect{s}_{1:m}  - \sum_{q=1}^{Q} w_q \intterm_{m}^{M}(\vect{\s}^q) (\rho_0)}_{1} \\
\le  & \frac{2 (M-m)!}{M!} \norm{\lbop_L}^{m}_1 \dt^{M-m+1} \norm{J}^{M-m+1}_{\infty}\ 
\sum_{
\substack{\norm{\vect{\alpha}}_{\infty} \le M-m\\ \norm{\vect{\beta}}_{\infty} \le M-m\\ \sum_{j=0}^m \alpha_j + \beta_j \ge M-m+1}} 
\frac{1}{\prod_{j=0}^{m}\alpha_j! \prod_{j=0}^{m}\beta_j!}.
\end{align*}
\end{proof}

By combing errors from various level $m\le M-1$ in \propref{prop::quad_error_bound}, we immediately have the followings:
\begin{proposition}
\label{prop::error_quadrature}
Suppose that $\dt\norm{J}_\infty \le 1$. Suppose that for any $M\in \Natural$, the choice of quadrature scheme at each level $m$ with $1\le m\le M-1$ (whose weights are denoted as $w_{m,q}$ and whose samples are denoted as $\vect{s}^{m,q}$ with sample number index $q\le Q_m$) satisfies the consistency criterion \eqref{eqn::criterion}. Then the total error 
\begin{align}
\label{eqn::error_quadrature}
\begin{aligned}
& \norm{\sum_{m=1}^{M} \dt^{m} \Big(\int_{\cube_m} \intterm_{m}^{M}(s_{m}, \cdots, s_1) (\rho_0) \dd \vect{s}_{1:m}  - \sum_{q=1}^{Q_m} w_{m,q} \intterm_{m}^{M}(\vect{\s}^{m,q}) (\rho_0)\Big)}_{1}\\
& \le \sum_{m=1}^{M-1} \frac{2 (M-m)!}{M!} C(M,m) \norm{\lbop_L}_1^{m} \norm{J}_{\infty}^{M-m+1}  \dt^{M+1}.
\end{aligned}
\end{align}

\end{proposition}
As a remark, the upper bound of $C(M, m)$ comes from \eqref{eqn::CMm}, in particular, we remark that the upper bound on the right hand side of \eqref{eqn::CMm} can be explicitly and easily computed.

\subsection{Error from normalization}
Finally, we deal with the normalization error.

\begin{lemma}[Normalization error]
\label{lem::normalization_error}
Suppose $\UA$ is an (unnormalized) positivity-preserving linear scheme, such that for all $0< \dt\le \delta_0$ and any density matrix $\rho_0$, we have
\begin{align*}
    \ttnormbig{e^{\lbop \dt}(\rho_0) - \UA(\rho_0)} \le c_M \dt^{M+1},
\end{align*}
where $\delta_0$ and $c_M$ are some positive constants.
Then for any $0 < \dt \le \delta_0$,
we have 
\begin{align*}
    \ttnormbig{e^{\lbop \dt}(\rho_0) - \NA(\rho_0)} \le 2 c_M \dt^{M+1}.
\end{align*}
Moreover, for any fixed time $T > 0$ and any density matrix $\rho_0$, if $N \ge T /\delta_0$, then
\begin{align*}
\begin{aligned}
    \ttnormbig{e^{\lbop T}(\rho_0) - (\alg_{\frac{T}{N}})^{N}(\rho_0)} \le 2 c_M T^{M+1} N^{-M}.
\end{aligned}
\end{align*}
\end{lemma}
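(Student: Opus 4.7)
The plan is to convert the assumed trace-norm bound on $\UA$ into a bound on the trace of $\UA(\rho)$, then expand $\NA - \UA$ algebraically, and finally propagate the resulting one-step error through $N$ iterations by a telescoping identity that places the linear (contractive) operator $e^{\lbop\dt}$ on one side and the nonlinear $\NA$ on the other.

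First I would exploit the fact that $e^{\lbop\dt}$ is trace-preserving to get, for any density matrix $\rho$,
\[
|\tr(\UA(\rho)) - 1| = |\tr(\UA(\rho) - e^{\lbop\dt}(\rho))| \le \ttnorm{\UA(\rho) - e^{\lbop\dt}(\rho)} \le c_M \dt^{M+1}.
\]
Because $\UA$ is positive-preserving, $\UA(\rho) \ge 0$ and $\ttnorm{\UA(\rho)} = \tr(\UA(\rho)) \le 1 + c_M\dt^{M+1}$. The hypothesis $\dt \le (2c_M)^{-1/(M+1)}$ gives $c_M\dt^{M+1} \le 1/2$, so $\tr(\UA(\rho)) \ge 1/2$. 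Writing
\[
\NA(\rho) - \UA(\rho) = \frac{1 - \tr(\UA(\rho))}{\tr(\UA(\rho))}\,\UA(\rho)
\]
and taking trace norm yields $\ttnorm{\NA(\rho) - \UA(\rho)} \le 2(1 + c_M\dt^{M+1})\, c_M \dt^{M+1} \le 3 c_M \dt^{M+1}$. A single triangle inequality with the assumed bound then produces the one-step estimate $\ttnorm{\NA(\rho) - e^{\lbop\dt}(\rho)} \le 4 c_M \dt^{M+1}$.

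For the global bound with $\dt = T/N$, the assumption $N \ge T\max\{1/\delta_0, (2c_M)^{1/(M+1)}\}$ is equivalent to $\dt \le \min\{\delta_0, (2c_M)^{-1/(M+1)}\}$, so the one-step bound applies at every step. Set $U = e^{\lbop\dt}$ and $V = \NA$; both map density matrices to density matrices, and $U$ is linear CPTP, hence a contraction in trace norm on Hermitian matrices (via the Jordan decomposition $F = F_+ - F_-$). The telescoping identity
\[
U^N(\rho_0) - V^N(\rho_0) = U^{N-1}\bigl(U(\rho_0) - V(\rho_0)\bigr) + \bigl(U^{N-1} - V^{N-1}\bigr)\bigl(V(\rho_0)\bigr),
\]
together with $V(\rho_0)$ being itself a density matrix, yields the recursion
\[
\sup_{\rho} \ttnorm{U^N(\rho) - V^N(\rho)} \le 4 c_M \dt^{M+1} + \sup_{\rho} \ttnorm{U^{N-1}(\rho) - V^{N-1}(\rho)},
\]
where the supremum is over density matrices. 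Iterating $N$ times and substituting $\dt = T/N$ gives the claimed $4 c_M T^{M+1} N^{-M}$.

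The one point demanding care is the nonlinearity of $\NA$, which forbids a direct operator-norm estimate of $U^N - V^N$. The telescoping above sidesteps this by always applying $V$ to genuine density matrices (so the one-step bound is available at each step) and applying the linear $U^{N-1}$ to $U(\rho) - V(\rho)$, a Hermitian trace-zero difference that is contracted by any CPTP map. Everything else in the argument is a routine algebraic manipulation of the normalization factor.
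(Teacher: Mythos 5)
Your proof is correct and follows essentially the same route as the paper's: bound $\lvert\tr(\UA(\rho))-1\rvert$ by the one-step hypothesis, absorb the normalization factor algebraically to get the $4c_M\dt^{M+1}$ one-step bound, and then telescope using the linearity and trace-norm contractivity of $e^{\lbop\dt}$ together with the fact that $\NA$ maps density matrices to density matrices. The only cosmetic difference is that you write the telescoping as a one-step recursion rather than the paper's explicit sum $\sum_{j=0}^{N-1}(e^{\lbop\dt})^{j}(e^{\lbop\dt}-\NA)(\NA)^{N-j-1}$, and your intermediate constants are slightly looser, but the final bounds coincide.
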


This lemma shows that for sufficiently small $\dt$, the normalization process only results in a constant prefactor, which is uniformly bounded by $2$.

\begin{proof}
Recall that $\NA(\rho) := \frac{\UA(\rho)}{\tr\bigl(\UA(\rho)\bigr)}$ for any density matrix $\rho$. Then for any density matrix $\rho_0$, 
\begin{align*}
    \ttnormbig{e^{\lbop\dt}(\rho_0) - \NA(\rho_0)} &\le \ttnormbig{e^{\lbop\dt}(\rho_0) - \UA(\rho_0)} + \ttnormbig{\UA(\rho_0) - \NA(\rho_0)}\\
    &\le  c_M \dt^{M+1} + \ttnormbig{\UA(\rho_0)} \cdot  \abs\bigg{\frac{1-\tr\bigl(\UA(\rho_0)\bigr)}{\tr\bigl(\UA(\rho_0)\bigr)}}.
\end{align*}
By assumption, we could straightforwardly observe that
\begin{align*}
\tr\bigl(\UA(\rho_0)\bigr) = \ttnormbig{\UA(\rho_0)} \in [1- c_M \dt^{M+1}, 1 + c_M \dt^{M+1}].
\end{align*}
Therefore,
\begin{align*}
    \ttnormbig{e^{\lbop\dt}(\rho_0) - \NA(\rho_0)} &\le c_M \dt^{M+1} +  c_M \dt^{M+1} = 2 c_M \dt^{M+1}.
\end{align*}
The finite-time error follows by a standard telescoping sum estimate:
\begin{align*}
    &\ttnormbig{e^{\lbop T}(\rho_0) - (\alg_{\frac{T}{N}})^{N} (\rho_0)} = \ttnormBig{\sum_{j=0}^{N-1} (e^{\lbop \dt})^{j} \circ (e^{\lbop \dt} - \NA)\circ (\NA)^{(N-j-1)} (\rho_0)} \\
&\le \sum_{j=0}^{N-1} \ttnormbig{(e^{\lbop \dt})^{j} \circ (e^{\lbop \dt} - \NA)\circ (\NA)^{(N-j-1)} (\rho_0)} \\
&= \sum_{j=0}^{N-1} \ttnormbig{(e^{\lbop \dt} - \NA)\circ (\NA)^{(N-j-1)} (\rho_0)}\\
&\le \sum_{j=0}^{N-1} 2 c_M \dt^{M+1} \le  2 c_M \dt^{M+1} N =  \frac{2 c_M T^{M+1}}{N^M},
\end{align*}
where we used the fact that Lindblad evolution is completely positive and trace-preserving in the second last line.
\end{proof}

\subsection{Proof of \thmref{thm::error_anal}}

A combination of \propref{prop::step_one_two_bound} and \propref{prop::error_quadrature} give us the total error of any $M^{\text{th}}$ order un-normalized scheme
\begin{align*}
&\norm{\rho_{\dt} - \alg_{\dt}^{(\text{un}, M)}(\rho_0)} \\
\le &\Big(\frac{23}{(M+1)!} \big(\norm{J}_{\infty} + \norm{\lbop_L}_{1}\big)^{M+1} +\sum_{m=1}^{M-1} \frac{2 (M-m)!}{M!} C(M,m) \norm{\lbop_L}_1^{m} \norm{J}_{\infty}^{M-m+1}\Big)  \dt^{M+1}.
\end{align*}
By \lemref{lem::normalization_error}, the conclusion for normalized scheme easily follows.

\end{document}